%
%

\documentclass{article}

\usepackage[a4paper, total={7in, 8in}]{geometry}
\usepackage[utf8]{inputenc}   
\usepackage[T1]{fontenc}      
\usepackage{multicol}
\usepackage{amsmath,amsthm} 
\usepackage{amssymb,mathrsfs} 
\usepackage{amssymb}
\usepackage{amsfonts}
\usepackage[normalem]{ulem}
\usepackage{nicefrac}
\usepackage{graphicx}
\usepackage{enumerate}
\usepackage{graphicx} 
\usepackage{lmodern} 
\usepackage{mathtools}
\usepackage{dsfont}
\usepackage{physics}

\usepackage{framed}
\usepackage{color}


\newcommand\N{\mathbb{N}}
\newcommand\E{\mathbb{E}}

\newcommand\R{\mathbb{R}}

\newcommand\X{\R^d}

\newcommand\eps{\varepsilon}
\newcommand\Gc{\mathcal{G}}
\newcommand\Ac{\mathcal{A}}
\newcommand\Act{\widetilde {\mathcal{A}}}
\newcommand\Hc{\mathcal{H}}


\newcommand\proba{\mathbb{P}}
\newcommand\e{\mathrm{e}}

\newcommand\Lc{\mathcal{L}}

\newcommand\Cb{C_{\mathrm{b}}}
\newcommand\Clip{C_{\mathrm{lip}}}

\newcommand\Cinfty{C^{\infty}}

\newcommand\Dc{\mathcal{D}}
\newcommand\Xtilde{\widetilde X}

\newcommand{\vertiii}[1]{{\left\vert\kern-0.25ex\left\vert\kern-0.25ex\left\vert #1 
    \right\vert\kern-0.25ex\right\vert\kern-0.25ex\right\vert}}

\renewcommand{\leq}{\leqslant}

\renewcommand{\geq}{\geqslant}

\newtheorem{theorem}{Theorem}
\newtheorem{lemma}{Lemma}
\newtheorem{assumption}{Assumption}

\newtheorem{remark}{Remark}
\newtheorem{definition}{Definition}

\iffalse    
   \excludecomment{corrige}
\else
  
\fi

\begin{document}

\title{Stochastic viscosity approximations of Hamilton--Jacobi
  equations and variance reduction}
\author{Grégoire Ferré\footnote{Email: gregoire.ferre@ponts.org.} \\
\small Capital Fund Management, 23-25 rue de l'Université, 75007 Paris, France}

\date{\today}

\maketitle

\vspace{-0.2cm}

\vspace{0.3cm}

\abstract{
  We consider the computation of free energy-like quantities for diffusions when resorting
  to Monte Carlo simulation is necessary, for instance in high dimension.
  Such stochastic computations typically suffer from high variance, in particular in a
  low noise regime, because the expectation is dominated by rare trajectories for
  which the observable reaches large values. Although importance sampling, or
  tilting of trajectories, is now a standard technique for reducing the variance of
  such estimators, quantitative criteria for proving that a given control reduces variance
  are scarce, and often do not apply to practical situations. The goal of this work is
  to provide a quantitative criterion for assessing whether a given bias reduces variance, and
  at which scale. We rely for this on a recently introduced notion of stochastic solution
  for Hamilton--Jacobi--Bellman (HJB) equations. Based on this tool,
  we introduce the notion of $k$-stochastic viscosity approximation (SVA) of a HJB equation.
  We next prove that such approximate solutions are associated with estimators
  having a relative variance of order~$k-1$
  at log-scale. In particular, a sampling scheme built from a $1$-SVA has bounded variance
  as noise goes to zero.
  Finally, in order to show that our definition is relevant, we provide
  examples of stochastic viscosity approximations of order one and two, with a numerical
  illustration confirming our theoretical findings.
}

\section{Introduction}
\label{sec:intro}

Rare events play a crucial role in many scientific contexts, from networks
dynamics to molecular systems and climatic hazards. By rare we mean that
the frequency of appearance of such events is low compared to the impact of its
realization, relatively to the time scale of observation. If one is interested
in sampling  these phenomena to estimate probabilities of occurrence
or average impact, standard Monte Carlo methods generally lead to a high relative variance, hence
poor quality of estimation. This can be understood heuristically, as few samples
realize large values of the estimator. Hence, many samples are required to obtain
an accurate estimation of the quantity of interest. This is typically not affordable
in practice, since one single simulation of a realistic system may already be 
computationally expensive.

A natural goal is thus to reduce the relative variance of a naive
Monte Carlo estimator, and we focus here on the case of diffusions. Many
techniques have been designed so far, in particular genetic algorithms relying on
a selection mechanism~\cite{del2004feynman,angeli2018rare}, and importance
sampling, which modifies the system to enhance the probability of observing
the chosen rare event~\cite{bucklew:2004}.
We consider here the latter technique, which has been applied to various
systems of interest, see for instance~\cite{dupuis2007dynamic,lelievre2016partial,grafke-vanden-eijnden:2019} and references therein.

However, among the  possible biases that can be chosen to enhance
the probability of rare events, many are not efficient 
to achieve variance reduction -- a wrong bias may even deteriorate the
properties of the estimator~\cite{glasserman-wang:1997}. In the specific
case of diffusion processes we know that, in general, there exists
a unique optimal bias leading to a zero-variance
estimator~\cite{fleming1977exit,vanden-eijnden-weare:2012,lelievre2016partial}, and that this
control is the solution to a Hamilton--Jacobi--Bellman (HJB) partial differential equation
(PDE) depending on the parameters of the system~\cite{fleming2006controlled}. This
PDE also provides  the expectation of interest for any initial condition,
hence one does not need to resort to Monte Carlo simulation when the HJB problem is solved..

Since a close form of the optimal control is not available in most cases, it needs to
be approximated. As for any evolution PDE, finite difference techniques can be
used~\cite{crandall1984two,souganidis1985approximation}, with error estimates available
for instance in Lebesgue-like norms. More recently, a series of
works also address the approximation of Hamilton--Jacobi problems through
a more sampling-oriented point of view~\cite{vanden-eijnden-weare:2012,hartmann2012efficient,zhang2014applications,grafke-vanden-eijnden:2019}.

A major issue with all these techniques is that, although they may perform well
in some practical cases, one is generally not ensured that the approximation of the
optimal control indeed reduces variance. For the case of diffusions, it is proved for the
time consuming approach proposed 
in~\cite{vanden-eijnden-weare:2012}, under a stringent condition of locally uniform
convergence of the approximation. More recently, the authors in~\cite{guyader2020efficient}
provide general conditions for an abstract importance sampling estimator to be efficient.
Although powerful, the generality of the latter approach makes it difficult to apply
to diffusion processes.

The purpose of this paper is to provide a new criterion for assessing the
performance of an approximate control in terms of variance reduction.
For this we are inspired by the important work~\cite{leonard2021feynman}, which introduces
a notion of solution to the Feynman--Kac 
and Hamilton--Jacobi equations that relies on a stochastic interpretation of
the generator. This allows to consider non-smooth problems and solutions -- we may call
such solutions  <<stochastic solutions>>. This theory uses
a finite entropy condition in relation with the Girsanov theorem and extended generators
based on martingale techniques.
Our main contribution is to adapt this notion of solution to an approximation
context: this leads us to introduce the notion of stochastic viscosity approximation
of order~$k$ (or $k$-SVA). Such approximate controls solve the Hamilton--Jacobi equation
along  trajectories tilted by the control itself up to a small process of order~$k$
in the noise term coefficient (which we call temperature).
We  naturally introduce this condition through the Girsanov path change of measure,
and then prove that an importance sampling estimator based on such a control
is $k$-log efficient (in particular its relative variance ratio decays exponentially
at order~$k-1$), which is our main result.

In order to illustrate that our method can be applied to actual approximation
schemes, we consider the approach proposed in~\cite{grafke-vanden-eijnden:2019}
to bias the dynamics along the instanton, or reacting path of the dynamics.
We prove, under geometric conditions, that the estimator proposed in this work
is a stochastic viscosity approximation of order~$1$, hence it is $1$-log efficient.
We then turn to the next order approximation presented in~\cite{ferre2020approximate},
showing that under similar conditions it provides a stochastic viscosity approximation
of order~$2$, hence it is $2$-log efficient and with logarithmic relative variance
decaying linearly to zero. A simple numerical application confirms our theoretical findings.
We emphasize that the proposed approximation is
not a standard Taylor series expansion in the temperature
as in~\cite{fleming1992asymptotic}, but is a polynomial
expansion around a particular path called the instanton, which is quite unusual.
We also mention that this expansion for now only applies in the situation where the
instanton is unique, which is quite limiting in practical situations. Indeed, realistic
potential energy landscapes are strongly non-convex, which typically leads to
multiple instantons. We however believe that our results
can be generalized to the case of a finite number of instantons, which is the
purpose of ongoing work.

Our results are organized as follows. Section~\ref{sec:LDP} presents the general
setting of the work with our assumptions, together with reminders on importance
sampling. In Section~\ref{sec:results},
we present our main definition of stochastic viscosity approximation, and the
associated variance reduction property. The case of the approximation expanding
around the instanton path is presented in Section~\ref{sec:instantonexpand},
with an illustrative numerical application. Finally, we discuss
our results and further research directions in Section~\ref{sec:discussion}.

\section{Controlled diffusions and Hamilton--Jacobi--Bellman equations}
\label{sec:LDP}

This section is devoted to generalities on controlled diffusions and importance sampling.
We first present in Section~\ref{sec:setting} our setting together with the
basic assumptions on the dynamics, before turning in Section~\ref{sec:importance} to
the notion of importance sampling. Section~\ref{sec:optimal} recalls some elements
on the optimal control associated with our problem.

\subsection{Dynamics and free energy}
\label{sec:setting}
This section introduces some notation used throughout. 
We consider a diffusion process~$(X_t^\eps)_{t\geq 0}$ evolving in~$\X$
with $d\in\N^*$, and satisfying the following stochastic differential equation (SDE):
\begin{equation}
  \label{eq:X}
  d X_t^{\eps} = b(X_t^{\eps})\,dt + \sqrt{\eps}\sigma
  \,dB_t,
\end{equation}
where $b:\X \to \R^d$, $\sigma:\X \to \R^{d\times m}$ and~$(B_t)_{t\geq 0}$ is a
$m$-dimensional Brownian motion for some $m\in\N^*$. In the above equation,~$\eps>0$
is a parameter that may vary. At this stage the initial condition
of~$(X_t^\eps)_{t\geq 0}$ is a random variable with arbitrary probability distribution.
We denote by~$\,\cdot\,$ the
scalar product on~$\R^d$, while,~$\nabla^2$ denotes the Hessian operator and for two
matrices~$A,B$ belonging to~$\R^{d \times m}$, we write $A:B=\mathrm{Tr}(A^TB)$.
The dynamics~\eqref{eq:X} induces a semigroup of operators~$(P_t^\eps)_{t\geq 0}$ over the
space of continuous bounded functions~$\Cb(\X)$ defined by
\begin{equation}
  \label{eq:Pt}
  \forall\, \varphi\in \Cb(\X),\quad\forall\, t \geq 0,\quad
  P_t^\eps\varphi: x\in\X \to \E_x[\varphi(X_t^\eps)],
\end{equation}
where~$\E_x$ denotes expectation over all realizations of~\eqref{eq:X}
when the process is started at $x\in\X$. We will also use the
notation~$\E_{r,x}$ when the process is started at position~$x$
at time $r>0$.

The typical problem we want to address in this paper is the computation of the following
exponential-type expectation
\begin{equation}
  \label{eq:FK}
  A^\eps =  \E_{x_0}\left[ \e^{\frac1\eps
      f(X_T^\eps)}
    \right],
\end{equation}
or the associated free energy
\[
  Z^\eps = \varepsilon \log A^\eps,
\]
where~$T>0$ is a final time and $x_0\in\X$ a fixed initial condition for~\eqref{eq:X}
at time~$t=0$.  In particular~$A^\eps$ serves as a smoothed version of the probability:
\[
\proba( X_T^\eps\in S),
\]
for a Borel set $S\subset \X$. This probability, which is interesting for various
applications, coincides with~\eqref{eq:FK} when $f(x)=0$ for $x\in\X$ and $f(x) = -\infty$
for $x\notin\X$.

For simplifying technical arguments, we use rather strong assumptions gathered below,
which still encompass various interesting cases.
\begin{assumption}
  \label{as:reg}
  The following regularity requirements hold:
  \begin{enumerate}
  \item The matrix $D = \sigma \sigma^T$ is positive definite\footnote{We could also
    consider multiplicative noise under a uniform ellipticity condition.}.
  \item The field~$b$ belongs to~$\Cinfty(\X)^d$ and is globally Lipschitz, \textit{i.e.}
    there is~$\Clip$ such that
    \[
    \forall\, x,\, y\in\X,\quad |b(x) - b(y)|\leq \Clip |x-y|.
    \]
  \item The function~$f$ belongs to~$\Cinfty(\X)$ and is upper bounded.
  \end{enumerate}
\end{assumption}
Under Assumption~\ref{as:reg} it is standard that~\eqref{eq:X} has
a unique strong solution for all times~\cite{karatzas2012brownian}. The
semigroup~$(P_t^\eps)_{t\geq}$ is a strongly continuous semigoup
(by continuity of the trajectories) with elliptic regularity.
Its generator~$L^\eps$
has domain~$\Dc(L^\eps)$ and is defined by
\[
\forall\, \varphi \in \Dc(L^\eps),\quad
L^\eps \varphi = \lim_{t\to 0}\, \frac{P_t^\eps\varphi - \varphi}{t},
\]
where the limit is in supremum norm. When tested over smooth enough
functions, Itô calculus shows that~$L^\eps$ is represented by the
differential operator~$\Lc^\eps$ defined by
\begin{equation}
  \label{eq:gen}
  \forall\,\varphi\in C^2(\X),\quad 
\Lc^\eps \varphi = b\cdot\nabla\varphi + \frac{\eps}{2} D:\nabla^2\varphi.
\end{equation}
Finally, Assumption~\ref{as:reg} ensures that~$A^\eps$ is finite
for all~$\eps >0$ (because~$f$ is upper bounded).

The restrictive conditions of Assumption~\ref{as:reg} are imposed in order to avoid non-essential technical
details, but can be relaxed depending on the situation of interest. For instance,
the second point of the assumption ensures well-posedness of the SDE~\eqref{eq:X}
and the associated PDE, but could be replaced by a local Lipschitz assumption together with
Lyapunov condition on the generator~\cite{bellet2006ergodic}. Similarly, this global Lipschitz
condition is used in Section~\ref{sec:instantonexpand}
to obtain standard error estimates, but could also be replaced there by a local Lipschitz
condition together with appropriate growth conditions at infinity on the drift~$b$.
Finally, the upper boundedness of~$f$ could  be weakened under a Lyapunov
condition to allow functions that are not bounded from above, see the technique used in~\cite{ferre2020large}.

\subsection{Estimators and importance sampling}
\label{sec:importance}
In a small temperature regime and even more when the dimension~$d$ is large,
it is generally impossible to estimate the integral in~\eqref{eq:FK}
by numerical quadrature over a discretized grid. This is why we typically resort
to stochastic sampling of the expectation.
We know that the estimator of~$A^\eps$ in~\eqref{eq:FK} based on generating independent samples
of the random variable
\[
\Ac^\eps = \e^{\frac1\eps
      f(X_T^\eps)} 
\]
is unbiased but
typically has high (relative) variance. This is particularly true in the
low~$\eps$ regime, which is the one of interest in physical applications.

A possible way to reduce the variance is to control the process~\eqref{eq:X} with
a function~$g$ (for now arbitrary), and to consider the so-called tilted
process~$(\widetilde X_t^\eps)_{t\in[0,T]}$ defined through the stochastic differential
equation
\begin{equation}
  \label{eq:Xtilde}
  d \widetilde X_t^{\eps} = b(\widetilde X_t^{\eps})\,dt
  +D \nabla g(t,\widetilde X_t^{\eps})\,dt
  + \sqrt{\eps}\sigma \,d B_t.
\end{equation}
We generically denote by~$(\Xtilde_t^\eps)_{t\in[0,T]}$ a process tilted by a function~$g$
to be made precise, and belonging to the class of functions described below.
\begin{assumption}
  \label{as:novikov}
  A real-valued function~$g$ is said to be a biasing or importance sampling function if
  $g\in C^{1,2}([0,T]\times \X)$ and the Novikov condition holds, namely
  \begin{equation}
    \label{eq:novikov}
    \E_{x_0}\left[\e^{\frac12 \int_0^T |\sigma^T \nabla g|^2(\Xtilde_t^\eps)dt }
      \right] < +\infty.
  \end{equation}
\end{assumption}

Under Assumption~\ref{as:novikov}, the Girsanov change of measure
between the paths~$(X_t^\eps)_{t\in[0,T]}$ and~$(\Xtilde_t^\eps)_{t\in[0,T]}$
(see~\cite[Chapter~3, Proposition~5.12]{karatzas2012brownian} or~\cite{bellet2006ergodic})
provides
  \begin{equation}
    \label{eq:intgirs}
    A^\eps=\E_{x_0} \left[ \e^{\frac{1}{\varepsilon} f(X_T^{\varepsilon})}\right]
    = \E_{x_0} \left[ \e^{\frac{1}{\varepsilon} f(\widetilde X_T^{\varepsilon})
        - \frac{1}{2\varepsilon}
        \int_0^T |\sigma^T \nabla g|^2(t,\widetilde X_t^{\varepsilon})\, dt - \frac{1}{\sqrt{\varepsilon}} \int_0^T
        \sigma^T \nabla g(t,\widetilde X_t^{\varepsilon})\, dB_t   }\right].
  \end{equation}
  We next use It\^o formula over a trajectory of~$(\widetilde X_t^{\varepsilon})_{t\geq 0}$ with the
  generator~\eqref{eq:gen} (since~$g\in C^{1,2}([0,T]\times \X)$):
  \[
  d g(t,\widetilde X_t^{\varepsilon}) = \big( \partial_t g + \Lc^\eps g
  + \nabla g\cdot D  \nabla g\big)(t,\widetilde X_t^{\varepsilon})\,dt
  +\sqrt{\varepsilon} \sigma^T \nabla g(t,\widetilde X_t^{\varepsilon}) \,dB_t.
  \]
  Integrating in time and dividing by~$\varepsilon$, this equation becomes
  \[
  -\frac{1}{\sqrt{\varepsilon}} \int_0^T\sigma^T \nabla g(t,\widetilde X_t^{\varepsilon}) \,dB_t
  =  - \frac{g(T,\widetilde X_T^{\varepsilon}) - g(0,\widetilde X_0)}{\varepsilon}
  + \frac{1}{\varepsilon} \int_0^T \big( \partial_t g + \Lc^\eps g + |\sigma^T \nabla g|^2\big)
  (t,\widetilde X_t^{\varepsilon})\,dt.
  \]
  Inserting the above equation into~\eqref{eq:intgirs} we finally obtain
  \begin{equation}
    \label{eq:Agirsanov}
  A^\eps = \E_{x_0} \left[ \exp\left(\frac{1}{\eps}
    \big[f(\widetilde X_T^{\eps}) + g(0, x_0) - g(T,\widetilde X_T^{\eps}) \big]
    + \frac{1}{\eps} \int_0^T (\partial_t g + \Hc^\eps g) (t,\widetilde X_t^{\eps})\,dt
    \right)\right],
  \end{equation}
where we introduced the  Hamilton--Jacobi--Bellman nonlinear differential
operator~$\Hc^\eps$ defined by:
\[
  \forall\,g \in C^2 (\X),\quad
  \Hc^\eps g  = b\cdot\nabla g +\frac \eps 2 D:\nabla^2 g + \frac 12 |\sigma^T \nabla g|^2.
\]
One can easily check that~$\Hc^\eps$ is actually the logarithmic transform
of~$\Lc^\eps$, namely
\begin{equation}
  \label{eq:HLexpo}
\forall\,g \in C^2(\X),\quad
\Hc^\eps g =\eps\, \e^{ - g/\eps } \Lc^\eps \e^{ g/\eps}.
\end{equation}
This logarithmic transform is a standard tool for studying partial differential
equations related to control problems,
see for instance~\cite{fleming2006controlled,pham2009continuous,evans2010partial}.

In what follows, for a function~$g$ to be specified and satisfying
Assumption~\ref{as:novikov}, we denote by
\[
  \Gc^\eps =  \exp\left(\frac{1}{\eps}
    \big[ g(0, x_0) - g (T,\widetilde X_T^{\eps}) \big]
    + \frac{1}{\eps} \int_0^T (\partial_t g + \Hc^\eps g)(t,\widetilde X_t^{\eps})\,dt
    \right)
\]
the Girsanov weight of the path change of measure. Therefore~\eqref{eq:Agirsanov}
rewrites:
\[
A^\eps = \E_{x_0}\left[ \e^{\frac1\eps
      f(\Xtilde_T^\eps)} \Gc^\eps
    \right].
\]
As a consequence of this computation, the estimator of~$A^\eps$
based on independent realizations of the random variable
\begin{equation}
  \label{eq:Acteps}
\Act^\eps = \e^{\frac1\eps f(\Xtilde_T^\eps) } \Gc^\eps
\end{equation}
provides an unbiased estimator of~$A^\eps$, \textit{i.e.} it holds
$\E_{x_0}[\Act^\eps]=\E_{x_0}[\Ac^\eps]=A^\eps$.

The goal of importance sampling is to find the best
biasing function~$g$ to minimize the relative variance of the tilted estimator~$\Act^\eps$.
In order to provide a criterion for the search of the best tilting~$g$, we recall that
the relative variance of the tilted estimator (see for
instance~\cite{vanden-eijnden-weare:2012} and references therein) is defined by
\[
\frac{\E_{x_0}\left[ \left(\e^{\frac 1\eps f(\Xtilde_T^\eps) }\Gc^\eps\right)^2
\right] -\E_{x_0}\left[\e^{\frac1 \eps f(\Xtilde_T^\eps) }\Gc^\eps
\right]^2 }
{\E_{x_0}\left[\e^{\frac 1 \eps f(X_T^\eps)} \right]^2}
=
\frac{\E_{x_0}\left[ \e^{\frac 2 \eps f(\Xtilde_T^\eps) }(\Gc^\eps)^2
\right]}
{\E_{x_0}\left[\e^{\frac 1 \eps f(X_T^\eps)} \right]^2}
- 1.
\]
We thus consider the ratio:
\[
\rho(\eps)= \frac{\E_{x_0}\left[ \e^{\frac 2 \eps f(\Xtilde_T^\eps) }(\Gc^\eps)^2
  \right]}
    {\E_{x_0}\left[\e^{\frac 1 \eps f(X_T^\eps)} \right]^2},
    \]
which controls the evolution of the relative variance as~$\eps\to 0$.
Since~$\rho$ typically grows exponentially as $\eps\to 0$, we are interested
in the evolution of~$\rho$ at log-scale, and we thus introduce the following standard
definition~\cite{vanden-eijnden-weare:2012}.
\begin{definition}
\label{def:logef}
The relative log efficiency of an estimator is defined by
\begin{equation}
  \label{eq:Rcoeff}
R(\varepsilon) = \eps \log \rho(\eps).
\end{equation}
We say that an estimator is $k$-log efficient if there is $k\geq 0$ such that
\[
R(\varepsilon) = \mathrm{O}(\varepsilon^k)
\]
when $\eps\to 0$.
\end{definition}
In words,  $k$-log efficiency describes a class of estimators
with a small relative variance at exponential scale, in the low temperature regime.
Although the notation~$k$ suggests that~$k$ is an integer (which will be the case in our
examples), this does not need to be the case in general. Note also
that, when~$0<k<1$, the relative log efficiency goes to zero but the relative
variance might not even bounded (in the case where~$R(\eps)/\eps\to+\infty$),
it is only bounded by a term growing at a subexponential rate.
We are thus mostly interested in the case~$k\geq 1$. In this situation, there
exists a constant~$C>0$ such that
\[
0\leq \rho(\eps) - 1\leq C\eps^{k-1},
\]
hence the relative variance is under control.
As a result, from Definition~\ref{def:logef}, for a given small~$\eps$,
an estimator with a larger~$k$ is likely to have a variance lower by one order
of magnitude. This is what we observe in the numerical simulations performed
in Section~\ref{sec:numerical}. Note however that reducing variance
by improving log efficiency should not be done at the cost of too much
additional computational time.

\subsection{The optimal control}
\label{sec:optimal}
There is actually a simple way to find a solution to the problem of minimizing the relative variance
of~$\Act^\eps$. Consider for this the Girsanov formula~\eqref{eq:Agirsanov}: all the
terms are stochastic except~$g(0,x_0)$, which is deterministic. It is thus natural
to set~$g=g^\eps$ solution to
\begin{equation}
  \label{eq:HJB}
  \left\{
  \begin{aligned}
    (\partial_t g^{\eps}  + \Hc^\eps g^{\eps})(t,x)  &= 0, & \forall\,(t,x)\in [0,T)\times \X, \\
      g^{\eps}(T,x) & = f(x) , & \forall\,x\in\X.
  \end{aligned}
  \right.
\end{equation}
If we assume that the solution to the above equation is
well-defined, having~$(\Xtilde_t^\eps)_{t\in[0,T]}$ biased with~$g^\eps$ indeed makes the
random variable~$\Act^\eps$ deterministic, hence the associated estimator has zero variance.
Actually, under Assumption~\ref{as:reg}, standard parabolic estimates
show that there is a unique smooth solution~$g^\eps$ to~\eqref{eq:HJB} for all $\eps >0$,
    which solves the problem of variance reduction.

Moreover, inserting~$g^\eps$ into~\eqref{eq:Agirsanov}, we see that
\[
 A^\eps=\e^{\frac{g^\eps(0,x_0)}{\eps}},
\]
and immediately
\begin{equation}
  \label{eq:gZeps}
 Z^\eps = g^\eps(0,x_0).
\end{equation}
In other words, the value of the optimal control at the starting point of the dynamics
provides the free energy of the system. 

\begin{remark}[Stochastic solutions]
\label{rem:stochsol}
It is interesting at this point to observe that the condition for~$g^\eps$ to solve~\eqref{eq:HJB}
everywhere is too stringent to minimize the variance of~$\Act^\eps$. It is indeed
sufficient that
\begin{equation}
  \label{eq:stochsol}
  \left\{
  \begin{aligned}
    \left(\partial_t g^{\eps} + \Hc^\eps   g^{\eps} \right)(t,\Xtilde_t^\eps)  &= 0,  \\
    g^{\eps}(T,\Xtilde_T^\eps) & = f(\Xtilde_T^\eps) , &
  \end{aligned}
  \right.    
\end{equation}
for almost all~$t\in [0,T]$ and almost surely with respect to the
process~$(\Xtilde_t^\eps)_{t\in [0,T]}$ defined in~\eqref{eq:Xtilde} when the bias is~$g^\eps$ itself. 
This notion of solution based on a stochastic representation is actually presented in details
in the recent paper~\cite{leonard2021feynman}.
While~\cite[Theorem~5.9]{leonard2021feynman} is motivated by considering non-smooth
coefficients and solutions for the HJB equation through extended generators,
it motivated the current study
for its relation with variance reduction. In other words, it suggests that the optimal
control only needs to be well approximated around appropriately titled trajectories (and
not everywhere in space) for relative variance to be reduced. This is the basis of the notion
of stochastic viscosity approximation that we propose in Section~\ref{sec:SVA}. 
\end{remark}

\section{Stochastic viscosity approximations}
\label{sec:results}

We now present the main results of the paper: the definition of stochastic
viscosity approximations in Section~\ref{sec:SVA},
and the associated variance reduction property in Section~\ref{sec:vareduc}.

\subsection{Definition}
\label{sec:SVA}
The goal of this work is to find criteria for a function~$g$ to reduce the
variance of the estimator~\eqref{eq:Acteps}. For this, we saw in
Section~\ref{sec:optimal} that the optimal solution to the variance reduction problem is
the solution to a HJB equation, whose associated estimator has zero variance.
Since the solution to this equation is in general unknown, it is natural to
try to approximate it.

There exist a series of works dedicated to
approximating Hamilton--Jacobi PDEs, typically by considering decentered
finite difference schemes and assessing convergence through a Lebesgue or Sobolev distance to
the exact solution~\cite{crandall1984two,souganidis1985approximation,barles2002convergence,oberman2006convergent,barles2007error}.
Such convergence criteria rely on non-local quantities, and they are a priori
not related to the problem of variance reduction. In other word, there is no one-to-one
correspondance
between the approximation of the HJB equation and relative variance of the associated estimator.
Moreover, we are often interested in situations where the dimension~$d$ of the system is large,
so that finite difference schemes are typically not applicable.
In this context of stochastic optimal control in high dimension, a series of more
specific techniques have also been designed,
for instance~\cite{vanden-eijnden-weare:2012,hartmann2012efficient,zhang2014applications,weinan2017deep}; yet the same issue of proving that variance reduction is achieved for a given
approximation remains -- it may even deteriorate the associated
estimator~\cite{glasserman-wang:1997}.

We now propose a new notion of approximation to the HJB problem~\eqref{eq:HJB}.
Such approximate solutions will be relevant to consider the variance reduction properties
of~$\Act^\eps$ in the low temperature regime. 
This is the main contribution of this paper. 
For this, we follow the stochastic solution approach discussed in Remark~\ref{rem:stochsol}
(see in particular~\cite{leonard2021feynman} and references therein),
which suggests for~$g$ to be an approximate solution of~\eqref{eq:HJB} around
tilted trajectories.

\begin{definition}[Stochastic viscosity approximation]
  \label{def:stochapprox}
  Let Assumption~\ref{as:reg} hold,~$g^\eps$ be the solution to~\eqref{eq:HJB},
  and $x_0\in \R^d$ be the initial condition defining~$A^\eps$ in~\eqref{eq:FK}.
  Let~$\hat g^\eps$ be a function satisfying Assumption~\ref{as:novikov}
  with~$(\Xtilde_t^\eps)_{t\in[0,T]}$ the process tilted by~$\hat g^\eps$
  through~\eqref{eq:Xtilde}. We say that~$\hat g^\eps$ is a $k$-stochastic
  viscosity approximation of~\eqref{eq:HJB} if there exist~$k,\eps_0>0$,
  a family of $\R^d$-valued adapted processes~$(Q_t^\eps)_{t\in[0,T]}$ for $\eps\in (0,\eps_0]$
    (residual process), and a family of real-valued random
    variables~$Q_f^\eps$ for $\eps\in (0,\eps_0]$ (residual terminal condition)  such that the
  following conditions hold:
  \begin{itemize}
  \item \emph{Approximate HJB equation along tilted paths}:
     for any~$\eps\in (0,\eps_0]$,
    \begin{equation}
      \label{eq:approxHJB}
      \left\{
      \begin{aligned}
      (\partial_t \hat g^\eps  + \Hc^\eps \hat g^\eps ) (t,\Xtilde_t^\eps)
        & =   \eps^k Q_t^\eps,
        \\
        \hat g^\eps (T,\Xtilde_T^\eps) = f(\Xtilde_T^\eps) & + \eps^k Q_f^\eps.    
      \end{aligned}
      \right.
  \end{equation}
    for almost all $t\in [0,T]$ and almost surely with respect to~$(\Xtilde_t^\eps)_{t\in[0,T]}$.
  \item \emph{Bound}: There exists a real number~$C_Q\geq 0$ independent of~$\eps$ such
    that the family of random variables
    \begin{equation}
      \label{eq:Qeps}
    Q^\eps = Q_f^\eps + \int_0^T Q_t^\eps\,dt
    \end{equation}
    satisfies
    \begin{equation}
      \label{eq:Qepsbound}
\forall\,\eps\in(0,\eps_0],\quad    \left|\log \E\left[\e^{2Q^\eps}\right]\right|\leq 2C_Q.
    \end{equation}
\end{itemize}
\end{definition}

Let us provide some comments on Definition~\ref{def:stochapprox}.
The main condition~\eqref{eq:approxHJB} is for~$\hat g^\eps$ to be an approximate
solution of~\eqref{eq:HJB} (including the boundary conditions) along trajectories
tilted by~$\hat g^\eps$ itself. By approximate we mean that evaluating the Hamilton--Jacobi
operator along such stochastic trajectories is an appropriately bounded stochastic process
(which we may call residual process) scaled by~$\eps^k$. Since the error term vanishes in
the small temperature limit, there is a notion
of viscosity involved, as it is presented for instance in~\cite[Chapter~2, Theorem~3.1]{freidlin1998random} 
We thus understand the naming stochastic
viscosity approximation, which we abbreviate as SVA.

\begin{remark}

  \begin{itemize}
  \item We ask for~$\hat g^\eps$ to be of class~$C^{1,2}$ for computing derivatives
    in the classical sense, but using the notion of stochastic
    derivative~\cite[Section~2]{leonard2021feynman} we could consider less regular
    approximations (typically almost everywhere differentiable functions, a situation
    that may arise for piecewise defined functions). This is a nice feature of SVAs.
    \item We also emphasize that Definition~\ref{def:stochapprox} applies way beyond
    the scope of Assumption~\ref{as:reg}, typically as soon as the stochastic process~\eqref{eq:X}
    and its associated Fokker--Planck equation  make sense.
  \item We claim by no mean that the bound~\eqref{eq:Qepsbound} is optimal in any
    sense: it is simply convenient for the proofs below, and seems reasonable to check
    at least in simple situations.
  \item Finally, we assume that~\eqref{eq:approxHJB} holds almost surely. However, it
    is likely that a weaker probabilistic sense is sufficient, for instance the
    equality could be satisfied in expectation only.
  \end{itemize}
\end{remark}

\subsection{Relation to variance reduction}
\label{sec:vareduc}

As explained in Section~\ref{sec:optimal},
we have defined our notion of stochastic viscosity approximation precisely in
order to make the tilted estimator~$\Act^\eps$ close to deterministic by reducing
the random terms in the Girsanov weight to a small-noise factor. We first
show that Definition~\ref{def:stochapprox} ensures a small temperature consistency of
the estimated quantity.

\begin{lemma}
  \label{lem:consistency}
Assume that~$\hat g^\eps$ is a $k$-SVA of~$g^\eps$ for some $k \geq 1$.
Then it holds
\begin{equation}
  \label{eq:approxZ0}
\hat g^\eps (0,x_0)= g^\eps(0,x_0) + \mathrm{O}(\eps^{k}).
\end{equation}
\end{lemma}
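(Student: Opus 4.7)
The plan is to apply Girsanov's identity~\eqref{eq:Agirsanov} with the bias $\hat g^\eps$, invoke the SVA relations~\eqref{eq:approxHJB} to pull the deterministic factor $\hat g^\eps(0,x_0)$ out of the exponential, and then compare with the exact identity~\eqref{eq:gZeps}. Concretely, substituting $(\partial_t \hat g^\eps + \Hc^\eps \hat g^\eps)(t,\Xtilde_t^\eps) = \eps^k Q_t^\eps$ and $\hat g^\eps(T,\Xtilde_T^\eps) - f(\Xtilde_T^\eps) = \eps^k Q_f^\eps$ into~\eqref{eq:Agirsanov}, the stochastic content of the exponent reduces to $\eps^{k-1} R^\eps$, where $R^\eps := \int_0^T Q_t^\eps\, dt - Q_f^\eps$, while the deterministic factor $\hat g^\eps(0,x_0)/\eps$ factors out of the expectation:
\[
A^\eps = \exp\!\Bigl(\frac{\hat g^\eps(0,x_0)}{\eps}\Bigr)\, \E_{x_0}\!\left[\exp\!\left(\eps^{k-1} R^\eps\right)\right].
\]
Taking $\eps \log$ on both sides and invoking $g^\eps(0,x_0) = \eps \log A^\eps$ from~\eqref{eq:gZeps} yields the master identity
\[
g^\eps(0,x_0) - \hat g^\eps(0,x_0) = \eps \log \E_{x_0}\!\left[\exp(\eps^{k-1} R^\eps)\right].
\]

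It then suffices to show the right-hand side is $\mathrm{O}(\eps^k)$. Since $k \geq 1$, after possibly shrinking $\eps_0$ we may assume $\eps^{k-1} \in (0,1]$ for $\eps \in (0,\eps_0]$. Convexity of $\alpha \mapsto \e^{\alpha x}$ applied at the convex combination $\alpha = \eps^{k-1}$ of the endpoints $\{0,1\}$ gives the upper bound $\e^{\eps^{k-1} R^\eps} \leq \eps^{k-1} \e^{R^\eps} + (1-\eps^{k-1})$, and Jensen's inequality provides the matching lower bound $\log \E[\e^{\eps^{k-1} R^\eps}] \geq \eps^{k-1} \E[R^\eps]$. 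Combining these with two-sided control on $\E[\e^{\pm R^\eps}]$ and on $\E[R^\eps]$, derived from the hypothesis~\eqref{eq:Qepsbound} together with the decomposition $R^\eps = Q^\eps - 2 Q_f^\eps$ handled via a Cauchy--Schwarz split, produces the bound $\bigl|\log \E[\exp(\eps^{k-1} R^\eps)]\bigr| \leq C\, \eps^{k-1}$, and multiplication by $\eps$ concludes the estimate.

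The main obstacle lies in this last step: the residual $R^\eps$ appearing in the exponent differs from the quantity $Q^\eps$ on which~\eqref{eq:Qepsbound} is stated by a sign flip on $Q_f^\eps$, so some additional moment splitting is required to convert the assumed log-moment bound on $Q^\eps$ into the two-sided control of $\E[\e^{\pm R^\eps}]$ that the sandwich argument uses. Once this reconciliation is achieved, everything else reduces to a direct manipulation of the Girsanov formula together with the defining SVA identities, with no further subtleties.
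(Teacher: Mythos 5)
Your overall strategy matches the paper's: apply Girsanov~\eqref{eq:Agirsanov} with bias $\hat g^\eps$, substitute the SVA relations~\eqref{eq:approxHJB}, factor out $\e^{\hat g^\eps(0,x_0)/\eps}$, compare with~\eqref{eq:gZeps}, and bound the residual expectation via a convexity argument. You are also right that substituting the terminal condition $\hat g^\eps(T,\Xtilde_T^\eps) = f(\Xtilde_T^\eps) + \eps^k Q_f^\eps$ \emph{as written} puts $-\eps^k Q_f^\eps$ into the Girsanov exponent, so the residual that actually appears is $\int_0^T Q_t^\eps\,dt - Q_f^\eps$ rather than $Q^\eps = Q_f^\eps + \int_0^T Q_t^\eps\,dt$; this is a genuine sign mismatch between Definition~\ref{def:stochapprox} and~\eqref{eq:Qeps}, and the paper's proof silently uses the opposite sign when it groups the exponent into $\eps^{k-1}Q^\eps$.

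However, your proposed reconciliation is a dead end. Writing $R^\eps = Q^\eps - 2Q_f^\eps$ and applying Cauchy--Schwarz produces factors of the form $\E[\e^{\pm 4Q_f^\eps}]$, and the hypothesis~\eqref{eq:Qepsbound} bounds only the single combination $\E[\e^{2Q^\eps}]$; nothing in the definition controls moments of $Q_f^\eps$ or of $\int_0^T Q_t^\eps\,dt$ separately, so the split cannot be closed. The correct fix is simply a relabeling: since $Q_f^\eps$ is an arbitrary real-valued random variable, flip its sign in the definition, i.e.\ read the terminal condition as $\hat g^\eps(T,\Xtilde_T^\eps) = f(\Xtilde_T^\eps) - \eps^k Q_f^\eps$. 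Then the Girsanov exponent is exactly $\eps^{k-1}Q^\eps$ and~\eqref{eq:Qepsbound} applies with no moment splitting. Once the sign is sorted out this way, the rest of your argument goes through and parallels the paper: your endpoint-convexity bound $\e^{\eps^{k-1}Q^\eps}\le \eps^{k-1}\e^{Q^\eps}+(1-\eps^{k-1})$ together with $\E[\e^{Q^\eps}]\le\E[\e^{2Q^\eps}]^{1/2}\le\e^{C_Q}$ gives $\log\E[\e^{\eps^{k-1}Q^\eps}]\le\eps^{k-1}(\e^{C_Q}-1)$, a valid alternative to the paper's use of the concavity of $x\mapsto x^{\eps^{k-1}/2}$. (For the matching lower bound, both your Jensen step and the paper's require $\E[Q^\eps]$ to be bounded from below, which does not follow from $|\log\E[\e^{2Q^\eps}]|\le 2C_Q$ alone; this small gap is shared by the paper and is not specific to your write-up.)
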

Recalling~\eqref{eq:gZeps}, it holds $Z^\eps = g^\eps(0,x_0)$. Lemma~\ref{lem:consistency}
thus shows that a stochastic viscosity approximation provides an estimate of the quantity
of interest through its initial condition according to
\begin{equation}
  \label{eq:Zapproxg}
  Z^\eps=\hat g^\eps (0,x_0)+ \mathrm{O}(\eps^{k}).
\end{equation}
The proof is as follows.
\begin{proof}[Proof of Lemma~\ref{lem:consistency}]
  We use the Girsanov theorem~\eqref{eq:Agirsanov} to write:
  \[
\e^{\frac1\eps g^\eps(0,x_0)} =\E_{x_0} \left[ \e^{\frac1\eps f( X_T^{\eps})
    }\right] = \E_{x_0} \left[ \exp\left(\frac{1}{\eps}
    \big[f(\widetilde X_T^{\eps}) + \hat g^\eps(0, x_0) - \hat g^\eps(T,\widetilde X_T^{\eps}) \big]
    + \frac{1}{\eps} \int_0^T (\partial_t \hat g^\eps + \Hc^\eps \hat g^\eps) (t,\widetilde X_t^{\eps})\,dt
    \right)\right],
\]
where~$(\widetilde X_t)_{t\in[0,T]}$ is the process~\eqref{eq:Xtilde} for the bias
function~$\hat g^\eps$. Using~\eqref{eq:approxHJB} we obtain
\[
\e^{\frac1\eps g^\eps(0,x_0)} =
\e^{\frac1\eps \hat g^\eps(0,x_0)}
 \E_{x_0}\left[ \e^{  \eps^{k-1} \left(  Q_F^\eps
        + \int_0^T Q_t^\eps\,dt\right) }
  \right]=
\e^{\frac1\eps \hat g^\eps(0,x_0)}
 \E_{x_0}\left[ \left(\e^{  2Q^\eps}\right)^{\frac{\eps^{k-1}}{2} }
  \right].
\]
Given that $k\geq 1$, the function $x\in\R_+ \mapsto x^{\eps^{k-1}/2}$ is
concave for $\eps \leq 1$ (we may replace~$\eps_0$ by $\mathrm{min}(\eps_0,1)$ if necessary).
As a result, we can use Jensen's inequality combined with~\eqref{eq:Qepsbound} to reach
\[
|g^\eps (0,x_0) - \hat g^\eps(t,x_0)|\leq  C_Q \eps^k,
\]
which is the desired conclusion.
\end{proof}

 Using Definition~\ref{def:logef} of log-efficiency, we can now state
the variance reduction property of a $k$-stochastic viscosity
approximation.
\begin{theorem}
  \label{th:logeff}
  Assume that~$\hat g^\eps$ is a $k$-SVA of~$g^\eps$ for some $k \geq 1$.
  Then the associated estimator~$\Act^\eps$ is $k$-log efficient.
\end{theorem}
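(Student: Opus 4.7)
The plan is to mirror the proof of Lemma~\ref{lem:consistency}, applying the Girsanov rewriting and the SVA relations to the second-moment numerator of $\rho(\eps)$ rather than to the first moment. Since $(A^\eps)^2 = \e^{\frac{2}{\eps} g^\eps(0, x_0)}$ by~\eqref{eq:gZeps}, it suffices to control $\E_{x_0}\bigl[\e^{\frac{2}{\eps}f(\Xtilde_T^\eps)}(\Gc^\eps)^2\bigr]$ in a comparable way.

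First, I would expand the squared Girsanov weight and substitute the SVA identities~\eqref{eq:approxHJB}: the boundary difference $f(\Xtilde_T^\eps) - \hat g^\eps(T, \Xtilde_T^\eps)$ is of order $\eps^k$, and the drift integrand $(\partial_t \hat g^\eps + \Hc^\eps \hat g^\eps)(t, \Xtilde_t^\eps)$ equals $\eps^k Q_t^\eps$. After the same cancellation as in Lemma~\ref{lem:consistency}, the exponent reduces to
\[
\frac{2}{\eps}\hat g^\eps(0, x_0) + 2\eps^{k-1}\widetilde Q^\eps,
\]
where $\widetilde Q^\eps$ is built from $Q_f^\eps$ and $\int_0^T Q_t^\eps\, dt$ (with the same sign convention as in the consistency lemma). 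The factor $2$ multiplying $\widetilde Q^\eps$ comes from squaring $\Gc^\eps$ and is precisely matched by the factor $2$ already built into~\eqref{eq:Qepsbound}. Dividing by $(A^\eps)^2$ yields
\[
\rho(\eps) = \exp\!\left(\tfrac{2}{\eps}\bigl[\hat g^\eps(0, x_0) - g^\eps(0, x_0)\bigr]\right)\E_{x_0}\!\bigl[\e^{2\eps^{k-1}\widetilde Q^\eps}\bigr].
\]

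Second, I take $\eps \log$ of both sides. Lemma~\ref{lem:consistency} handles the prefactor: $2[\hat g^\eps(0, x_0) - g^\eps(0, x_0)] = \mathrm{O}(\eps^k)$. For the expectation, the assumption $k \geq 1$ allows me to reduce to $\eps_0 \leq 1$, so that $\eps^{k-1} \in (0, 1]$ and $x \mapsto x^{\eps^{k-1}}$ is concave on $\R_+$. Jensen's inequality then gives
\[
\E_{x_0}\!\bigl[\e^{2\eps^{k-1}\widetilde Q^\eps}\bigr] = \E_{x_0}\!\bigl[(\e^{2\widetilde Q^\eps})^{\eps^{k-1}}\bigr] \leq \E_{x_0}\!\bigl[\e^{2\widetilde Q^\eps}\bigr]^{\eps^{k-1}},
\]
so its logarithm is at most $2 C_Q \eps^{k-1}$ by~\eqref{eq:Qepsbound}. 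Multiplying by $\eps$ contributes $\mathrm{O}(\eps^k)$. Together with the trivial lower bound $R(\eps) \geq 0$ (equivalent to $\rho(\eps) \geq 1$, i.e.\ nonnegativity of the relative variance), this establishes $R(\eps) = \mathrm{O}(\eps^k)$, namely $k$-log efficiency.

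The main obstacle I anticipate is the careful bookkeeping in the first step: one has to verify that the specific exponential moment produced by squaring $\Gc^\eps$ is exactly the one controlled by~\eqref{eq:Qepsbound}, paying attention to the sign of the boundary residual $Q_f^\eps$ (the same issue already dealt with in Lemma~\ref{lem:consistency}). Once this matching is nailed down, the remainder reduces to one invocation of the consistency lemma and one application of Jensen's inequality, both of which are routine. Note also that this argument exploits only $k \geq 1$, matching the regime where Definition~\ref{def:logef} actually delivers a bounded relative variance.
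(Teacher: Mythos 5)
Your proposal is correct and follows essentially the same route as the paper: rewrite the second moment via Girsanov, substitute the SVA relations to isolate the residual $Q^\eps$, invoke Lemma~\ref{lem:consistency} for the initial-value prefactor, and conclude via Jensen's inequality applied to the concave map $x \mapsto x^{\eps^{k-1}}$ together with the bound~\eqref{eq:Qepsbound}. The sign-bookkeeping on $Q_f^\eps$ that you flag is a real (if cosmetic) wrinkle in the paper's notation, but it does not change the argument.
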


This is our main result associated to the definition of stochastic viscosity approximation.
Since  Lemma~\ref{lem:consistency} already provides the estimate~\eqref{eq:Zapproxg} of the free energy,
Theorem~\ref{th:logeff} ensures that the correction terms can be estimated
by Monte Carlo simulation with a bounded or vanishing variance.
Contrarily to Lebesgue-norm criteria that may not ensure good variance reduction properties,
we here draw a very clear connection between SVA and log-efficiency of the associated estimator.
Let us present the proof, which is rather straightforward.

\begin{proof}[Proof of Theorem~\ref{th:logeff}]
  For this proof, we write
  \[
  \hat \Gc^\eps =  \exp\left(\frac{1}{\eps}
    \big[ \hat g^\eps(0, x_0) - \hat g^\eps (T,\widetilde X_T^{\eps}) \big]
    + \frac{1}{\eps} \int_0^T (\partial_t \hat g^\eps + \Hc^\eps \hat g^\eps)(t,\widetilde X_t^{\eps})\,dt
    \right)
    \]
    for the Girsanov weight associated with the function~$\hat g^\eps$.
  Let us consider first the quantity
  \[
  \E_{x_0}\left[ \e^{\frac 2 \eps f(\Xtilde_T^\eps) }(  \hat \Gc^\eps)^2
    \right]=\e^{\frac2\eps \hat g^\eps(0,x_0)}
    \E_{x_0}\left[ \e^{\frac 2 \eps \left( f(\Xtilde_T^\eps) -\hat g^\eps(T,\Xtilde_T^\eps)
        + \int_0^T(\partial_t \hat g^\eps + \Hc^\eps \hat g^\eps)(t,\Xtilde_t^\eps)\,dt\right) }
  \right].
    \]
    The approximate HJB equation~\eqref{eq:approxHJB}
    (including the terminal condition) in Definition~\ref{def:stochapprox}
    implies again that
    \[
    \E_{x_0}\left[ \e^{\frac 2 \eps ( f(\Xtilde_T^\eps) - \hat g^\eps(T,\Xtilde_T^\eps)
        + \int_0^T(\partial_t \hat g^\eps + \Hc^\eps \hat g^\eps)(t,\Xtilde_t^\eps)\,dt }
      \right] = \E_{x_0}\left[ \e^{\frac 2 \eps \left( \eps^k Q_F^\eps
        + \eps^k \int_0^T Q_t^\eps\,dt\right) }
  \right].
    \]
    Using the  consistency relation~\eqref{eq:approxZ0}
    proved in Lemma~\ref{lem:consistency} and
    recalling the expression~\eqref{eq:Qeps} of the residual~$Q^\eps$, we obtain
    \[
  \E_{x_0}\left[ \e^{\frac 2 \eps f(\Xtilde_T^\eps) }(  \hat \Gc^\eps)^2
    \right]=\e^{\frac2\eps \left(g^\eps(0,x_0) + \mathrm{O}(\eps^k)\right)}
  \E_{x_0}\left[\e^{2 \eps^{k-1} Q^\eps}\right].
  \]
  Since $k\geq 1$, we use again the concavity of the function $x\in\R_+ \mapsto x^{\eps^{k-1}}$
  for $\eps \leq 1$.
  Together with Jensen's inequality and~\eqref{eq:Qepsbound} we obtain,
  for $\eps\in(0, \eps_0]$:
  \[
  \E_{x_0}\left[\e^{2\eps^{k-1} Q^\eps}\right]
  \leq \E_{x_0}\left[\e^{2 Q^\eps}\right]^{\eps^{k-1}}\leq \e^{2\eps^{k-1} C_Q}.
  \]
  
  Finally, by~\eqref{eq:gZeps}, it holds
  $\E\left[\e^{\frac 1 \eps f(X_T^\eps)} \right]^2 = \e^{\frac 2 \eps g^\eps(0,x_0)}$,
  so that we obtain the bound
  \[
  \rho(\eps) \leq
    \frac{\e^{\frac2\eps (g^\eps(0,x_0) + \mathrm{O}(\eps^k))} \e^{2\eps^{k-1} C_Q}
    }{\e^{\frac 2 \eps g^\eps(0,x_0)}} 
  = \e^{2C_Q \eps^{k-1} +\mathrm{O}(\eps^{k-1})}.
  \]
 This implies that
  \[
R(\varepsilon) = \eps \log \rho(\eps)  = \mathrm{O}(\eps^k)
\]
as $\eps\to 0$, which is the desired result.
\end{proof}

Note that we cover the case $k\geq 1$ for simplicity and because this is
the main situation of interest (since relative variance is a priori not even bounded
for $k<1$), but one can prove a  similar result for $k\in[0,1)$ under stronger assumptions,
  for instance by assuming that~$Q^\eps$ is bounded. As discussed after Definition~\ref{def:logef},
  in the particular case where~$k\geq 1$,
the relative variance $\rho(\eps) - 1$ is bounded, hence we ensure that the variance
of numerical simulations remains finite even in the vanishing temperature regime,
which is what we are looking for in practice.

\section{Instanton and stochastic viscosity approximations}
\label{sec:instantonexpand}
In this section, we propose to construct a $1$-SVA and a $2$-SVA
from the theory of low temperature reaction paths called
instantons~\cite{grafke-vanden-eijnden:2019,ferre2020approximate} and we provide
illustrative numerical results.
We first present and motivate the instanton dynamics in Section~\ref{sec:lowtemp}.
We then show in Section~\ref{sec:order1} that the earlier
proposed sampling technique~\cite{grafke-vanden-eijnden:2019} relying on these equations
provides a $1$-SVA of the optimal control under appropriate assumptions.
It is thus associated with a $1$-log efficient estimator. This provides a mathematical
variance reduction property for this importance sampling scheme under geometric
conditions, which had not been proved so far.
In Section~\ref{sec:order2} we consider the next order expansion
proposed in~\cite{ferre2020approximate}, which
is a $2$-SVA of the optimal control, hence improving on variance reduction.
We finally propose in Section~\ref{sec:numerical} a simple numerical application
to illustrate the validity of our predictions. 

\subsection{Low temperature asymptotics}
\label{sec:lowtemp}
We start by presenting low temperature asymptotics concerning
the dynamics that we will use to practically build SVAs.
Recalling that $\Hc^0\varphi = b\cdot \nabla \varphi + |\sigma^T \nabla \varphi|^2 /2$,
the zero temperature HJB equation reads
\begin{equation}
  \label{eq:HJB0}
  \left\{
  \begin{aligned}
    (\partial_t g^0  + \Hc^0 g^0)(t,x)  &= 0, & \forall\,(t,x)\in [0,T)\times \X, \\
      g^0(T,x) & = f(x) , & \forall\,x\in\X.
  \end{aligned}
  \right.
\end{equation}
This is a first order nonlinear PDE for which a smooth solution often
does not exist, because of the lack of diffusive regularization. However,
a viscosity solution can most often be described through the
relation~\cite[Chapter~2, Theorem~3.1]{freidlin1998random}:
\[
\forall\, (t,x)\in[0,T]\times \X,\quad
g^0(t,x) = \lim_{\eps \to 0}\ g^\eps(t,x).
\]

In some cases, mostly when the function~$g^0$ is smooth, we can describe
the solution through its characteristics system. The same methodology
applied in a large deviations context leads to the instanton
dynamics, which is described by a forward-backward
system~\cite{grafke-vanden-eijnden:2019,ferre2020approximate} with
$\R^d$-valued position~$(\phi_t)_{t\in[0,T]}$ and momentum~$(\theta_t)_{t\in[0,T]}$
solution to
\begin{equation}
  \label{eq:instanton}
  \left\{
    \begin{aligned}{}
      &\dot \phi_t = b(\phi_t) + D \theta_t, \qquad& &\phi_0 = x_0,\\
      &\dot \theta_t = -(\nabla b)(\phi_t)\cdot \theta_t ,& &\theta_T = \nabla f(\phi_T).
    \end{aligned}
  \right.
\end{equation}
The above formula is a definition of the instanton, which can be derived
by minimizing the Freidlin--Wentzell rate function under
constraints (see~\cite{grafke-vanden-eijnden:2019} and references therein).
This dynamics describes the path realizing fluctuations in the low temperature limit.

There are various ways to introduce the instanton dynamics~\eqref{eq:instanton}.
Let us provide a maybe unusual, PDE-oriented motivation for this object, by introducing
the function
\begin{equation}
  \label{eq:g0}
  \forall\,(t,x)\in[0,T]\times \X,\quad
\hat g_1^\eps(t,x) =f(\phi_T) - \frac12 \int_t^T \theta_s \cdot D \theta_s \, ds
+ \theta_t\cdot (x - \phi_t).
\end{equation}
Note that we keep the~$\eps$ dependency on~$\hat g_1^\eps$ although it does not
depend on~$\eps$, because such an approximation may depend on~$\eps$ in general.
This function is made of three parts: a constant term, a time-dependent only
term, and a linear term in the variable~$(x - \phi_t)$. We may thus call~$\hat g_1^\eps$
a time-inhomogeneous first order polynomial in the variable~$(x - \phi_t)$.

To understand better this function, we first see that
$\hat g_1^\eps(T,x) = f(\phi_T) + \nabla f (\phi_T)\cdot (x - \phi_T)$,
which is a linearization of~$f$ around the  instanton's terminal point~$\phi_T$.
Let us now compute the value of the HJB  operator $\partial_t + \Hc^\eps$ over~$\hat g_1^\eps$.
Noting that $\nabla \hat g_1^\eps (t,x)=\theta_t$
and $\theta_t \cdot D\theta_t = |\sigma^T \theta_t|^2$ 
we get, for any $(t,x)\in[0,T]\times \X$: 
\[
\begin{aligned}
\left(\partial_t\hat g_1^\eps  + \Hc^\eps\hat g_1^\eps\right)(t,x)
 & =
 \frac12 |\sigma^T \theta_t|^2 + \dot \theta_t \cdot (x - \phi_t)
- \theta_t \cdot\dot \phi_t + b(x)\cdot \theta_t + \frac12 |\sigma^T \theta_t|^2
\\ & =  |\sigma^T \theta_t|^2- \nabla b (\phi_t) \cdot (x - \phi_t)
- \theta_t \cdot (b(\phi) + D\theta_t) + b(x)\cdot \theta_t
\\ & = \theta_t\cdot \big( b(x) - b(\phi_t) -  \nabla b (\phi_t) \cdot (x - \phi_t)\big)
= \mathrm{O}\big( (x-\phi_t)^2 \big),
\end{aligned}
\]
where we used~\eqref{eq:instanton} for the second line, while the
last inequality is derived by Taylor expansion.
We actually obtain from the above computation that
\begin{equation}
  \label{eq:HJB0hat}
  \left\{
  \begin{aligned}
    (\partial_t \hat g_1^\eps  + \Hc^\eps\hat g_1^\eps)(t,\phi_t)  &= 0, & \forall\, t\in [0,T), \\
      \hat g_1^\eps(T,\phi_T) & = f(\phi_T).&
  \end{aligned}
  \right.
\end{equation}
In other words,~$\hat g_1^\eps$ solves the HJB problem~\eqref{eq:HJB}
along the instanton.

Since our discussion in Remark~\ref{rem:stochsol} suggests that
importance sampling only requires approximation of HJB around some trajectories
and~$(\phi_t)_{t\in[0,T]}$ describes the most likely fluctuation path as $\eps\to 0$,
we understand that the ansatz~\eqref{eq:g0} can serve as a first guess for
approximating the solution~$g^\eps$ of the finite temperature equation~\eqref{eq:HJB}
along most likely fluctuation paths,
and hence reduce variance.

\subsection{Instanton bias}
\label{sec:order1}

We consider the approximation~\eqref{eq:g0} under the following assumption
(see Remark~\ref{rem:asdiscuss} for further discussion).
\begin{assumption}
  \label{as:instanton}
  The dynamics~\eqref{eq:instanton} admits a unique solution $(\phi_t,\theta_t)_{t\in[0,T]}$,
  which is of class~$C^1([0,T])^d\times C^1([0,T])^d$.
\end{assumption}
Under Assumption~\ref{as:instanton}, the function~$\hat g_1^\eps$ is well-defined
and we denote by~$(\Xtilde_t^{\eps,1})_{t\in[0,T]}$ the dynamics biased by~$\hat g_1^\eps$
as in~\eqref{eq:Xtilde}. 
Compared to the control proposed in the previous work~\cite{ferre2020approximate},
we add a constant and a time-only
dependent components (the first two terms in~\eqref{eq:g0}). This is not crucial
from a practical perspective since
the biasing force~$\nabla \hat g_1^\eps$ does not depend on these terms,
but it makes the entire approximation argument more elegant.

Before proving that~$\hat g_1^\eps$ is a $1$-stochastic viscosity approximation
of~$g^\eps$,
we first show that a trajectory tilted by~$\hat g_1^\eps$ is a perturbation of
the instanton -- very much in the spirit of the expansions that can be found
in~\cite[Chapter~2]{freidlin1998random}.

\begin{lemma}
\label{lem:Xtapproxphit}
Under Assumptions~\ref{as:reg} and~\ref{as:instanton},
there exist an adapted process~$(Y_t^\eps)_{t\in[0,T]}$ and a constant~$ C_T>0$ such that
  \begin{equation}
    \label{eq:consistency}
    \forall\,t\in[0,T],\quad
    \Xtilde_t^{\eps,1} = \phi_t + \sqrt \eps Y_t^\eps,
  \end{equation}
  and for all $t\in[0,T]$ the following bound holds 
  \begin{equation}
    \label{eq:Ytbound}
    \forall \, \eps>0,\quad |Y_t^\eps|\leq C_T \sup_{0\leq s \leq T} \,|\sigma^T B_s|,
  \end{equation}
   almost surely with respect to realizations of the Brownian motion.
\end{lemma}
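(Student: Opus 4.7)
The plan is to write out the SDE satisfied by $\widetilde X_t^{\eps,1}$, subtract the instanton ODE pathwise, and close the estimate by a deterministic Gronwall argument using the global Lipschitz bound on $b$ from Assumption~\ref{as:reg}.

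First I would observe that $\hat g_1^\eps(t,x)$ is affine in $x$ with $\nabla \hat g_1^\eps(t,x) = \theta_t$, so the tilted dynamics~\eqref{eq:Xtilde} driven by $\hat g_1^\eps$ reduces to
\[
d\widetilde X_t^{\eps,1} = b(\widetilde X_t^{\eps,1})\,dt + D\theta_t\,dt + \sqrt{\eps}\,\sigma\,dB_t,
\qquad \widetilde X_0^{\eps,1} = x_0 = \phi_0.
\]
Since this is a standard SDE with Lipschitz drift (uniformly in $\eps$, because $t \mapsto D\theta_t$ is continuous and deterministic by Assumption~\ref{as:instanton}), it admits a unique strong solution on $[0,T]$. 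Subtracting the instanton equation $\dot\phi_t = b(\phi_t) + D\theta_t$ immediately cancels the $D\theta_t$ term and yields the pathwise identity
\[
\widetilde X_t^{\eps,1} - \phi_t = \int_0^t \big(b(\widetilde X_s^{\eps,1}) - b(\phi_s)\big)\,ds + \sqrt{\eps}\,\sigma B_t.
\]
Defining $Y_t^\eps = (\widetilde X_t^{\eps,1} - \phi_t)/\sqrt{\eps}$ directly produces the decomposition~\eqref{eq:consistency}, and after dividing the above identity by $\sqrt{\eps}$ one obtains the integral equation
\[
Y_t^\eps = \int_0^t \frac{b(\phi_s + \sqrt{\eps}\,Y_s^\eps) - b(\phi_s)}{\sqrt{\eps}}\,ds + \sigma B_t.
\]

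Finally I would apply the global Lipschitz bound $|b(\phi_s + \sqrt{\eps}\,Y_s^\eps) - b(\phi_s)| \leq \Clip \sqrt{\eps}\,|Y_s^\eps|$ from Assumption~\ref{as:reg}, which gives
\[
|Y_t^\eps| \leq \Clip \int_0^t |Y_s^\eps|\,ds + \sup_{0\leq s\leq T} |\sigma B_s|
\]
almost surely. A pathwise Gronwall inequality then produces the announced bound~\eqref{eq:Ytbound} with $C_T = \e^{\Clip T}$. The argument is essentially routine; the only thing to be careful about is that the estimate must hold almost surely (not merely in $L^p$), but since the Lipschitz constant is deterministic and the noise is handled by the uniform supremum of $|\sigma B_s|$, Gronwall applies pathwise with no additional difficulty. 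The key structural point enabling the whole computation is the affineness of $\hat g_1^\eps$ in $x$, which makes the tilted drift a time-dependent but $x$-Lipschitz perturbation of $b$ that is furthermore tuned exactly so that $\phi_t$ is the noiseless trajectory.
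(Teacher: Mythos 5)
Your proof is correct and is essentially identical to the paper's: define $Y_t^\eps = (\widetilde X_t^{\eps,1}-\phi_t)/\sqrt{\eps}$, subtract the instanton ODE from the tilted SDE so that the $D\theta_t$ terms cancel, invoke the global Lipschitz bound on $b$, and close with a pathwise Gronwall inequality to get $C_T = \e^{\Clip T}$. The only difference is presentational (you work with integral identities rather than writing the Itô differential first, and you correctly emphasize that affineness of $\hat g_1^\eps$ in $x$ is what makes $\phi_t$ the noiseless trajectory), so there is nothing substantive to flag.
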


The proof follows a standard Gronwall argument.

\begin{proof}[Proof of Lemma~\ref{lem:Xtapproxphit}]
 Let~$\eps>0$ be arbitrary.
Since the solution to~\eqref{eq:instanton}
is unique, we can define for all $t\in[0,T]$ the adapted process\footnote{We mention that
  in the case of multiple instantons solution to~\eqref{eq:instanton}, one could consider defining
several approximations centered on each instanton with their residual processes.}
\begin{equation}
  \label{eq:Ytproof}
  Y_t^\eps =\frac{\Xtilde_t^{\eps,1} - \phi_t}{\sqrt{\eps}}.
\end{equation}
Since $\nabla g(t,x) = \theta_t$ for all $(t,x)\in[0,T]\times \X$,
 Itô formula shows that
\[
\begin{aligned}
\sqrt \eps dY_t^\eps & = b(\Xtilde_t^{\eps,1}) + D\theta_t\, dt + \sqrt \eps \sigma^T \, dB_t
-\dot \phi_t \, dt
\\ & = \big( b (\Xtilde_t^{\eps,1}) - b(\phi_t)\big) +  \sqrt \eps \sigma^T \, dB_t,
\end{aligned}
\]
where we used~\eqref{eq:instanton}.
Integrating now in time and taking the absolute value leads to
\[
\forall\,t\in[0,T],\quad
\sqrt \eps |Y_t^\eps - Y_0^\eps|\leq
\int_0^t |b (\Xtilde_s^{\eps,1}) - b(\phi_s)|\,ds + \sqrt \eps |\sigma^T B_t|.
\]
Recalling that $Y_0^\eps = 0$ and that~$b$ is globally Lipschitz 
with constant~$\Clip$ (by Assumption~\ref{as:reg}), we obtain
\[
\forall\,t\in[0,T],\quad
\sqrt \eps |Y_t^\eps|\leq \int_0^t\Clip |\Xtilde_s^{\eps,1} - \phi_s|\,ds + \sqrt \eps  |\sigma^T B_t|,
\]
which rewrites
\[
\forall\,t\in[0,T],\quad
|Y_t^\eps|\leq \Clip \int_0^t |Y_s^\eps|\,ds +   |\sigma^T B_t|.
\]
Gronwall lemma then implies that
\[
\forall\,t\in[0,T],\quad
|Y_t^\eps|\leq \sup_{0\leq s\leq T} |\sigma^T B_s|\, \e^{T \Clip},
\]
which holds for any continuous realization of the Brownian motion, hence
almost surely, and for any $\eps>0$. We thus have the bound~\eqref{eq:Ytbound},
while~\eqref{eq:consistency} follows from the definition~\eqref{eq:Ytproof}
of~$Y_t^\eps$. 
\end{proof}

Proving that~$\hat g_1^\eps$ is a stochastic viscosity approximation of~$g^\eps$
is now a matter of Taylor expansion under some technical conditions.

\begin{theorem}
  \label{th:1stoch}
  Let~$\lambda$ be the largest eigenvalue of $D$.
  Let moreover Assumptions~\ref{as:reg} and~\ref{as:instanton} hold and suppose that
  there exist $\delta_f,\delta_b\geq 0$ such that
  \begin{equation}
    \label{eq:condelta}
    \delta_f + \delta_b T < \frac{1}{\lambda TC_T^2},    
  \end{equation}
  and:
  \begin{itemize}
  \item The function~$f$ satisfies
  \begin{equation}
    \label{eq:Hessianf}
  \forall\, y \in\R^d,\quad
  \sup_{x\in\X}\ y\cdot \nabla^2 f(x) y \leq \delta_f|y|^2.
  \end{equation}    
\item The field~$b$ satisfies
  \begin{equation}
  \label{eq:Hessianb}
  \forall\, y \in\R^d,\quad\forall\, t\in[0,T],\quad
  \sup_{x\in\X}\ y\cdot \big(\theta_t\cdot \nabla^2 b(x)\big) y \leq \delta_b|y|^2.
\end{equation}
  \end{itemize}
  Then the function~$\hat g_1^\eps$ defined in~\eqref{eq:g0} satisfies
  Assumption~\ref{as:novikov} and is a $1$-SVA of the HJB problem.
  As a result, it defines a $1$-log efficient estimator.
\end{theorem}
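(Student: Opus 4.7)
The plan is to directly instantiate Definition~\ref{def:stochapprox} with $k=1$ by constructing the residual process $Q_t^\eps$ and terminal residual $Q_f^\eps$ through second-order Taylor expansion, and then to verify the exponential-moment bound~\eqref{eq:Qepsbound} using the a.s.\ control from Lemma~\ref{lem:Xtapproxphit}. Assumption~\ref{as:novikov} follows trivially: since $\hat g_1^\eps$ is affine in $x$, its spatial gradient equals $\theta_t$, which is continuous on $[0,T]$ by Assumption~\ref{as:instanton}, so $|\sigma^T\nabla \hat g_1^\eps|^2$ is a bounded deterministic function of $t$ and the Novikov integrand is uniformly bounded.

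The residuals are built from the identity already derived in Section~\ref{sec:lowtemp},
\[
(\partial_t \hat g_1^\eps + \Hc^\eps \hat g_1^\eps)(t, x) = \theta_t \cdot \big(b(x) - b(\phi_t) - \nabla b(\phi_t)(x-\phi_t)\big).
\]
Substituting $x = \Xtilde_t^{\eps,1} = \phi_t + \sqrt\eps\, Y_t^\eps$ from Lemma~\ref{lem:Xtapproxphit} and applying Taylor's formula with integral remainder to each component of $b$ yields $\tfrac{\eps}{2}\, Y_t^\eps \cdot (\theta_t \cdot \nabla^2 b(\xi_t^\eps))\, Y_t^\eps$ for some point $\xi_t^\eps$ on the segment joining $\phi_t$ to $\Xtilde_t^{\eps,1}$; this defines $Q_t^\eps$. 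The terminal condition is handled identically: since $\hat g_1^\eps(T,\cdot)$ is precisely the first-order Taylor polynomial of $f$ at $\phi_T$, one obtains $Q_f^\eps = -\tfrac{1}{2}\, Y_T^\eps \cdot \nabla^2 f(\eta^\eps)\, Y_T^\eps$ for some $\eta^\eps$ on the segment joining $\phi_T$ to $\Xtilde_T^{\eps,1}$. Both equations in~\eqref{eq:approxHJB} with $k=1$ then hold by construction.

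It remains to control $Q^\eps = Q_f^\eps + \int_0^T Q_t^\eps\,dt$ in the sense of~\eqref{eq:Qepsbound}. The one-sided Hessian hypotheses~\eqref{eq:Hessianf}--\eqref{eq:Hessianb} combined with the a.s.\ estimate $|Y_t^\eps|\leq C_T\sup_{0\le s\le T}|\sigma^T B_s|$ from Lemma~\ref{lem:Xtapproxphit} yield the pathwise upper bound
\[
Q^\eps \leq \tfrac{1}{2}(\delta_f + \delta_b T)\, C_T^2 \sup_{0\le s\le T}|\sigma^T B_s|^2,
\]
while Jensen's inequality turns the corresponding integrable lower bound on $Q^\eps$ (which comes from the same pathwise estimate) into the lower half of~\eqref{eq:Qepsbound}. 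The main obstacle is therefore the upper half: one must show $\E[\e^{2Q^\eps}]<+\infty$, which reduces to exponential integrability of $\sup_{0\le s\le T}|\sigma^T B_s|^2$. Since $\sigma^T B_s$ is a Gaussian process whose covariance at time $s$ has operator norm at most $s\lambda$, a Doob maximal inequality combined with a standard Gaussian tail bound gives $\E[\e^{\kappa \sup_{s\le T}|\sigma^T B_s|^2}]<+\infty$ precisely when $\kappa\lambda T < 1$. Choosing $\kappa = (\delta_f + \delta_b T)\, C_T^2$, the smallness condition~\eqref{eq:condelta} is exactly this subcriticality threshold, producing a constant $C_Q$ that fulfils~\eqref{eq:Qepsbound}. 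Hence $\hat g_1^\eps$ is a $1$-SVA, and Theorem~\ref{th:logeff} delivers the announced $1$-log efficiency.
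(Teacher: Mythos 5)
Your proof follows the same overall strategy as the paper's: instantiate Definition~\ref{def:stochapprox} with $k=1$ by Taylor expansion along the tilted path, use the a.s.\ control on~$Y_t^\eps$ from Lemma~\ref{lem:Xtapproxphit}, and then establish the exponential moment bound~\eqref{eq:Qepsbound} from the smallness condition~\eqref{eq:condelta}. The Lagrange remainder in place of the paper's integral remainder, and the Doob/Gaussian-tail argument in place of the paper's explicit Borodin--Salminen formula~\eqref{eq:cdf}, are legitimate minor variants of the same idea and are not a problem.

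However, there is a genuine sign gap that makes your concluding pathwise estimate false as written. You set $Q_f^\eps=\eps^{-1}\big(\hat g_1^\eps(T,\Xtilde_T^{\eps,1})-f(\Xtilde_T^{\eps,1})\big)=-\tfrac12\,Y_T^\eps\cdot\nabla^2 f(\eta^\eps)\,Y_T^\eps$, which is the sign dictated by the literal reading of~\eqref{eq:approxHJB}. But the one-sided hypothesis~\eqref{eq:Hessianf} bounds $y\cdot\nabla^2 f\,y$ from \emph{above}, hence it bounds your $Q_f^\eps$ from \emph{below}: $Q_f^\eps\geq-\tfrac{\delta_f}{2}|Y_T^\eps|^2$, with no upper control at all (take $\nabla^2 f$ very negative). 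Consequently the claimed inequality $Q^\eps\leq\tfrac12(\delta_f+\delta_b T)C_T^2\sup_s|\sigma^T B_s|^2$ does not follow, and with it the finiteness of $\E[\e^{2Q^\eps}]$ collapses. The paper's own proof avoids this by taking $Q_f^\eps=\eps^{-1}\big(f(\Xtilde_T^{\eps,1})-\hat g_1^\eps(T,\Xtilde_T^{\eps,1})\big)$ --- the opposite sign, which \emph{is} the convention consistently used in the proofs of Lemma~\ref{lem:consistency} and Theorem~\ref{th:logeff} (there the Girsanov exponent contains $f-\hat g^\eps(T,\cdot)$, written as $+\eps^k Q_f^\eps$), so Definition~\ref{def:stochapprox} as printed carries a sign typo that you inherited. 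With the corrected sign $Q_f^\eps=\tfrac12\,Y_T^\eps\cdot\nabla^2 f(\eta^\eps)\,Y_T^\eps\leq\tfrac{\delta_f}{2}|Y_T^\eps|^2$, your estimate and the rest of your argument go through. Separately, your remark that Jensen's inequality produces ``the lower half of~\eqref{eq:Qepsbound}'' from ``the corresponding integrable lower bound on $Q^\eps$'' is unsupported: the Hessian hypotheses are one-sided and give only one-sided control on $Q^\eps$, so no lower bound on $Q^\eps$ (and hence none on $\log\E[\e^{2Q^\eps}]$) comes for free; you would need a separate argument (e.g., $\log\E[\e^{2Q^\eps}]\geq 2\E[Q^\eps]$ together with a bound on $\E[Q^\eps]$) or an additional hypothesis. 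The paper itself does not spell this out either, but you should not assert it as established.
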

The proof of Theorem~\ref{th:1stoch} is presented just below.
We may interpret the geometric conditions~\eqref{eq:Hessianf}-\eqref{eq:Hessianb} as
complementary to Assumption~\ref{as:instanton}. First,~\eqref{eq:Hessianf} asks for~$f$
to be <<mostly concave>>. Typically, we may expect problems to occur when~$f$ has several
pronounced maxima, so that the maximum value of~$f$ can be reached in several ways.
A similar condition~\eqref{eq:Hessianb} holds for the drift~$b$: if we further assume
that~$\nabla f$ has nonnegative coordinates, then~$\theta_t$ has nonnegative coordinates
at least for small times, and so~\eqref{eq:Hessianb} is a concavity condition on~$b$.
Obviously, these conditions are quite drastic and could be relaxed if one proves
additional stability properties of the process or has other information on the drift~$b$
(for instance bounded second derivative). 

\begin{proof}[Proof of Theorem~\ref{th:1stoch}]

We check the approximate HJB equation~\eqref{eq:approxHJB} with bound~\eqref{eq:Qepsbound}
by relying on Lemma~\ref{lem:Xtapproxphit}.
Consider the terminal condition first.  Using the terminal value of~$\theta_T$
in~\eqref{eq:instanton}, we have
  \[
  \hat g_1^\eps(T,\Xtilde_T^{\eps,1}) = f (\phi_T) +\nabla f (\phi_T)\cdot( \Xtilde_T^{\eps,1} - \phi_T).
  \]
  Taylor's integral theorem applied at first order to~$f$ around~$\phi_T$ shows,
  using  Lemma~\ref{lem:Xtapproxphit}, that we have almost surely
  \[
  \begin{aligned}
  Q_f^\eps  =\eps^{-1}\big(
f(\Xtilde_T^{\eps,1}) -   \hat g_1^\eps(T,\Xtilde_T^{\eps,1})\big)
 & =\eps^{-1}\big(  f(\Xtilde_T^{\eps,1}) -f (\phi_T)- \nabla f (\phi_T)
\cdot( \Xtilde_T^{\eps,1} - \phi_T)\big)\\ &  = \frac 12 \int_0^1
 (1-s) Y_T^\eps\cdot\nabla^2 f\big( \phi_T +s \sqrt \eps Y_T^\eps \big) Y_T^\eps\,ds.
\end{aligned}
\]
According to~\eqref{eq:Hessianf}, it holds
\[
Q_f^\eps\leq \delta_f \frac{|Y_T^\eps|^2}{4},
\]
which we will use to control the residual process\footnote{Although we impose
  a constraint on~$\nabla^2 f$ over the whole space, we see that it is essentially
  important to control this quantity around~$\phi_T$. One difficulty however
  comes from the fact that~$\phi_T$
  itself depends on the function~$f$. This makes a general condition for~$f$ to satisfy the
  bound hard to state at this level of generality.
  The same is true for the condition on~$\nabla^2 b$.}.

Let us now check the dynamical part in~\eqref{eq:approxHJB}. First, we easily compute
  \[
  \begin{aligned}
    \partial_t \hat g_1^\eps (t,x) & =  \frac12 \theta_t \cdot D \theta_t
    + \dot \theta_t\cdot (x - \phi_t) - \theta_t \cdot \dot \phi_t,
    \\
    \nabla \hat g_1^\eps(t,x) & = \theta_t,
    \\
    \nabla^2  \hat g_1^\eps(t,x) & = 0.
  \end{aligned}
  \]
  Therefore, using Lemma~\ref{lem:Xtapproxphit} and equation~\eqref{eq:instanton} we obtain
  \[
  \begin{aligned}
    (\partial_t  \hat g_1^\eps + \Hc^\eps  \hat g_1^\eps)
    (t,\Xtilde_t^{\eps,1}) & =  \frac12 \theta_t \cdot D \theta_t
  - \sqrt\eps\theta_t \cdot \nabla b(\phi_t) Y_t^\eps - (b(\phi_t) + D\theta_t) \cdot \theta_t
  + b(\Xtilde_t^{\eps,1})\cdot \theta_t +\frac12 \theta_t \cdot D \theta_t.
  \\ & = - \sqrt\eps\theta_t \cdot \nabla b(\phi_t) Y_t^\eps  
  + (b(\Xtilde_t^{\eps,1})- b(\phi_t))\cdot \theta_t
  \\ & = \theta_t\cdot \left(
  b(\Xtilde_t^{\eps,1})- b(\phi_t) - \nabla b(\phi_t) (\Xtilde_t^{\eps,1} - \phi_t)  
  \right).
  \end{aligned}
  \]
  Finally, by defining the process
  \[
  Q_t^\eps =\frac{\theta_t}{\eps} \cdot\big(  b(\Xtilde_t^{\eps,1})- b(\phi_t)
  - \nabla b(\phi_t)(\Xtilde_t^{\eps,1} - \phi_t)  
  \big),
\]
we have 
\[
  (\partial_t \hat g_1^\eps + \Hc^\eps \hat g_1^\eps)(t,\Xtilde_t^{\eps,1}) = \eps Q_t^\eps.
\]
In order to obtain~\eqref{eq:Qepsbound} and conclude, it
remains to bound~$Q_t^\eps$. We apply again Taylor's
theorem at first order between~$\Xtilde_t^{\eps,1}$ and~$\phi_t$ for all time~$t\geq0$
and any continuous realization of~$(\widetilde X_t^\eps)_{t\in[0,T]}$, now to the
$\R^{d}$-valued function~$ b$, to obtain 
\[
\forall\,t\in[0,T],\quad Q_t^\eps =Y_t^\eps\cdot\left( \frac 12 \int_0^1
 (1-s)\theta_t\cdot \nabla^2 b\big( \phi_t +s \sqrt \eps Y_t^\eps \big)\,ds\right) Y_t^\eps.
\]
Finally, the condition~\eqref{eq:Hessianb} ensures that
\[
Q_t^\eps \leq \frac{\delta_b}{4}|Y_t^\eps|^2,
\]
and the total residual~$Q^\eps$ defined in~\eqref{eq:Qeps} satisfies:
\[
Q^\eps \leq  \frac{\delta_f}{4}|Y_T^\eps|^2 + \frac{\delta_b}{4}\int_0^T |Y_t^\eps|^2\, dt, 
\]
Using~\eqref{eq:Ytbound} in Lemma~\ref{lem:Xtapproxphit} then leads to
\[
Q^\eps \leq C_T^2\sup_{0\leq s \leq T}|\sigma^T B_s|^2 \left( \frac{\delta_f}{4} + \frac{\delta_b}{4}T\right).
\]
In order to find conditions for the integration condition~\eqref{eq:Qepsbound} to hold,
we recall that~$\lambda>0$ is the largest eigenvalue of $D=\sigma \sigma^T$, and introduce
\[
c = 2 \lambda C_T^2 \left( \frac{\delta_f}{4} + \frac{\delta_b}{4}T\right).
\]
Since $|\sigma^T B_s|^2 = B_s^T D B_s$, we have
\[
2Q^\eps \leq c\sup_{0\leq s \leq T}| B_s|^2.
\]
It is simple to show that
\[
\sup_{0\leq s \leq T}|B_s|^2 = \left(\sup_{0\leq s \leq T}| B_s|\right)^2.
\]
On the other hand, the law of $ \sup_{0\leq s \leq T}| B_s|$ is described
by~\cite[Formula~1.1.8 on page~250]{borodin2015}:
\begin{equation}
  \label{eq:cdf}
\forall\, y,x\geq 0, \quad \proba_x\left(\sup_{0\leq s \leq T}| B_s|\geq y\right)
= \sum_{k=-\infty}^{+\infty} (-1)^k\mbox{sign}(x +2(k+1)y)
\mbox{Erfc} \left( \frac{|x+(2k+1)y|}{\sqrt2T}\right),
\end{equation}
where
\[
\forall\,x\in\R,\quad \mbox{Erfc}(x) = \frac{2}{\sqrt{\pi}}\int_x^{+\infty}
\e^{-u^2}du.
\]
Using the case $x=0$ in~\eqref{eq:cdf}, we conclude that
\[
\E\left[ \e^{2Q^\eps}\right]\leq \E\left[ \e^{c\left(\sup_{0\leq s \leq T}| B_s|\right)^2}\right]
\]
is finite independently of~$\eps>0$ when
$c < 1/(2T)$, which is equivalent to~\eqref{eq:condelta}.
As a result,~\eqref{eq:Qepsbound} holds provided~$\delta_f$ and~$\delta_b$ are small enough
for~\eqref{eq:condelta} to hold.

Since~$\nabla \hat g_1^\eps$ does not depend on the position variable,
Novikov's condition~\eqref{eq:novikov} holds and so does Assumption~\ref{as:novikov}.
As a result, all the conditions of
Definition~\ref{def:stochapprox} are satisfied and~$\hat g_1^\eps$ is a $1$-SVA
of the HJB problem~\eqref{eq:HJB}. According to~Theorem~\ref{th:logeff}, the
associated estimator is $1$-log efficient, which concludes the proof.

\end{proof}

\begin{remark}[About Assumption~\ref{as:instanton}]
  \label{rem:asdiscuss}
  While Assumptions~\ref{as:reg} and~\ref{as:novikov} are of technical nature and
  could easily be relaxed, Assumption~\ref{as:instanton} is of geometrical nature, and has a
    crucial importance.
    It is easy to build examples for which all quantities are smooth,
    but where this assumption does not hold~\cite{glasserman-wang:1997}, typically for
    symmetry or convexity reasons. An example is to set $d=1$, $b=0$, $\sigma = 1$,
  $f(x) = \tanh(|x|)$ and $x_0 = 0$: in this case the instanton may maximize the
    final value of~$f$ either by going to the right or to the left for the same
    kinetic cost, so there is no uniqueness of the instanton. This is a situation
    similar to the one appearing when solving first order PDEs with the method
    of characteristics.

    In this case, it is natural to divide the space~$\R^d$ into different regions
    where one instanton would dominate. An idea in this direction is to resort
    to the so-called \emph{regions of strong regularity} defined in~\cite{fleming1992asymptotic}.
    The idea is to split the space in~$N$ regions~$(\Gamma_i)_{i=1}^N$ such that~$[0,T]\times \R^d = \cup_{i=1}^N \Gamma_i$
    with $\forall\,i\neq j$, $|\Gamma_i\cap\Gamma_j|=0$, and  such that the problem can be treated independently in each
    region. We would then define~$N$ instantons and their approximations~$(\hat g^{\eps,i})_{i=1}^N$,
    and a natural candidate approximation is
    \[
    \hat g^{\eps}(t,x) =
    \hat g^{\eps,i}(t,x)\quad \mbox{if}\quad (t,x)\in\Gamma_i \quad \mbox{and}\quad x\notin\Gamma_j,\ \forall\,j\neq i.
    \]
    Since we assume the boundaries of these sets to have zero Lebesgue measure, this function is well-defined
    almost everywhere. Of course, using a finite polynomial in each region,~$\hat g^{\eps}$
    could have singularities on the boudaries of the sets,
    which might be a side effect of going through a low temperature expansion. 
  \end{remark}

\subsection{Riccati bias}
\label{sec:order2}

One motivation for the current work was to better understand the approximation
proposed in~\cite{ferre2020approximate}. The point of the authors was to
consider~$\hat g_1^\eps$ as the first term of a time-dependent Taylor expansion
in the argument~$(x - \phi_t)$, which naturally suggests to continue the series with higher orders.
Based on this earlier work~\cite[eq.~(21)]{ferre2020approximate}, we introduce
the following quadratic approximation, for a $R^{d\times d}$-valued
dynamics~$(K_t)_{t\in[0,T]}$,
\begin{equation}
  \label{eq:g2}
  \hat g_2^\eps(t,x) = f(\phi_T) - \frac12 \int_t^T \theta_s \cdot D \theta_s\,ds
  +\frac\eps2 \int_t^T  D : K_s\,ds
  + \theta_t\cdot ( x - \phi_t) + \frac12  ( x - \phi_t) \cdot K_t  ( x - \phi_t).
\end{equation}
Like for the expansion performed in the proof of
Theorem~\ref{th:1stoch}, we can consider the above ansatz and identify the
 equations associated with each term to obtain a $2$-SVA. This is exactly the computation performed
in~\cite[Section~3 and Appendix~B]{ferre2020approximate}, which we don't reproduce here
for conciseness. The obtained result is that we shall take~$(K_t)_{t\in[0,T]}$ solution
to the following Riccati system:
\begin{equation}
  \label{eq:EDOK}
  \left\{\begin{aligned}
 \dot{K}_t + (\nabla b)^{\mathrm{T}} K_t + K_t^{\mathrm{T}}
  \nabla b +  \nabla^2 b \cdot \theta_t + K_t^{\mathrm{T}} D K_t & = 0,
  \\ K_T   & = \nabla^2 f(\phi_T).
  \end{aligned}
  \right.
\end{equation}
By imposing~$(\phi_t,\theta_t)_{t\in [0,T]}$ to be solution to~\eqref{eq:instanton}
and~$(K_t)_{t\in[0,T]}$ to be solution to~\eqref{eq:EDOK} (assuming well-posedness
of these systems, which is true at least for small times) while assuming appropriate 
bounds on~$\nabla^3 f$ and~$\nabla^3 b$, we can thus show that~$\hat g_2^\eps$ is a $2$-SVA of~$g^\eps$.
As mentionned above, the computation performed
in~\cite[Section~3 and Appendix~B]{ferre2020approximate} follows the same Taylor expansion
as for Theorem~\ref{th:1stoch} but one order beyond, which naturally leads to
a Riccati equation. Since~$\hat g_2^\eps$ defines a $2$-SVA, the associated
estimator is 2-log efficient. This entails that the second order approximation proposed
in~\cite{ferre2020approximate} indeed improves on variance reduction compared to the
first order one earlier proposed in~\cite{grafke-vanden-eijnden:2019}. For a given~$\eps$,
we expect to find an error one order of magnitude smaller for a $2$-SVA compared to a $1$-SVA,
which is a major improvement.

\subsection{Numerical application}
\label{sec:numerical}

We now illustrate our theoretical findings by proposing a simple situation in
which all our conditions are satisfied and our results apply. We then perform a
numerical simulation that confirms our predictions.

sThe system is a simple one dimensional Ornstein--Uhlenbeck process, \textit{i.e.}
$d=1$, $\sigma = 1$ and $b(x) = - x$. In order to propose a smoothed version of
a probability like~$\mathbb{P}( X_T^\eps > 1)$, we set $f(x) = - (x-2)^4/4$. This
weight function gives larger importance to those trajectories ending around~$2$. We also
set $x_0 = -1$, so tilted trajectories typically relax towards~$0$ before being
pushed away to larger positive values. This
system is non-trivial while satisfying Assumption~\ref{as:reg}. In this case the
solution to the instanton equation~\eqref{eq:instanton} is actually explicit,
while the Riccati equation~\eqref{eq:EDOK} is stable and can be integrated with
a simple Euler scheme.
For more general dynamics, the instanton can be computed with the algorithm
described in~\cite[Section~III~A]{grafke-vanden-eijnden:2019} (which 
we actually use in practice). Finally, $f''(x)\leq 0$ and $b''(x)=0$ for all $x\in\R$,
so the conditions~\eqref{eq:Hessianf} and~\eqref{eq:Hessianb} are satisfied and
Theorem~\ref{th:1stoch} applies. As a result,~$\hat g_1^\eps$ defines a $1$-SVA
of the system. Similarly, since the system is quite simple, we can show that~$\hat g_2^\eps$
is a $2$-SVA.

In order to perform numerical simulations, we set $T=5$ and discretize the SDE,
the instanton and Riccati equations with a time step $\Delta t = 5\times 10^{-3}$. We numerically
assess that this value is small enough to neglect the bias due to time discretization
compared to the relative variance we are interested in (not shown).
We draw $5\times10^6$ trajectories to estimate the relative variance ratio~$R(\eps)$
defined in~\eqref{eq:Rcoeff} for a series of values of~$\eps$. The left panel
in Figure~1 shows the evolution of the instanton and Riccati terms with time.
While momentum adds an upward drift, the negative Riccati term forces the process to remain around
the instanton. On the right panel we show the evolution of~$R(\eps)$
with~$\eps$ for the different estimators. We observe that~$R(\eps)$
is roughly constant without biasing, meaning relative variance grows exponentially
in this case. On the other hand,
the linear and quadratic decay at log scale for the first and second order biases
 are respectively in accordance with the results of
Theorem~\ref{th:1stoch} and the discussion in Section~\ref{sec:order2}.

\begin{figure}[h]
  \label{fig:OU}
  \includegraphics[width=0.48\textwidth]{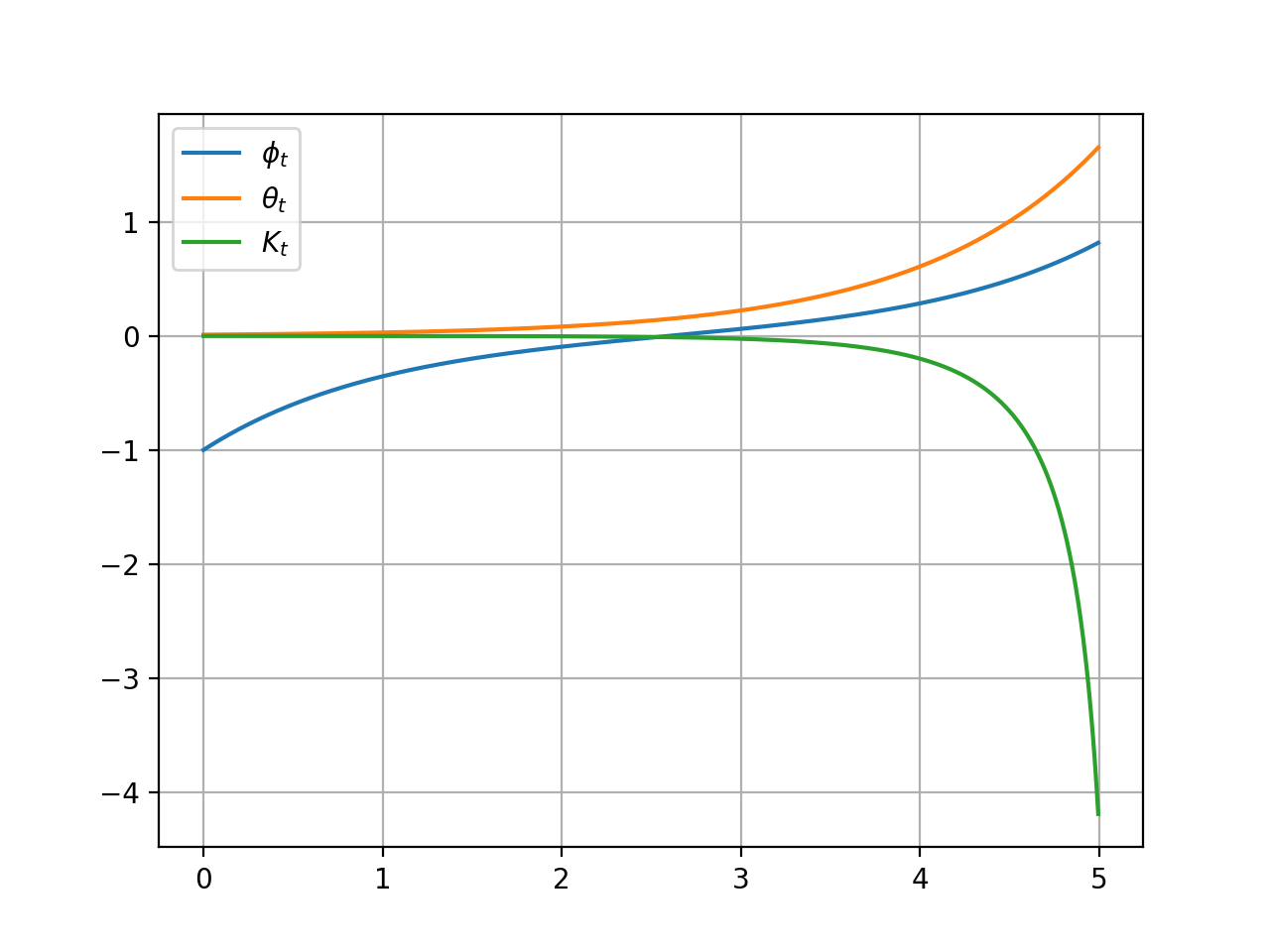}
  \includegraphics[width=0.48\textwidth]{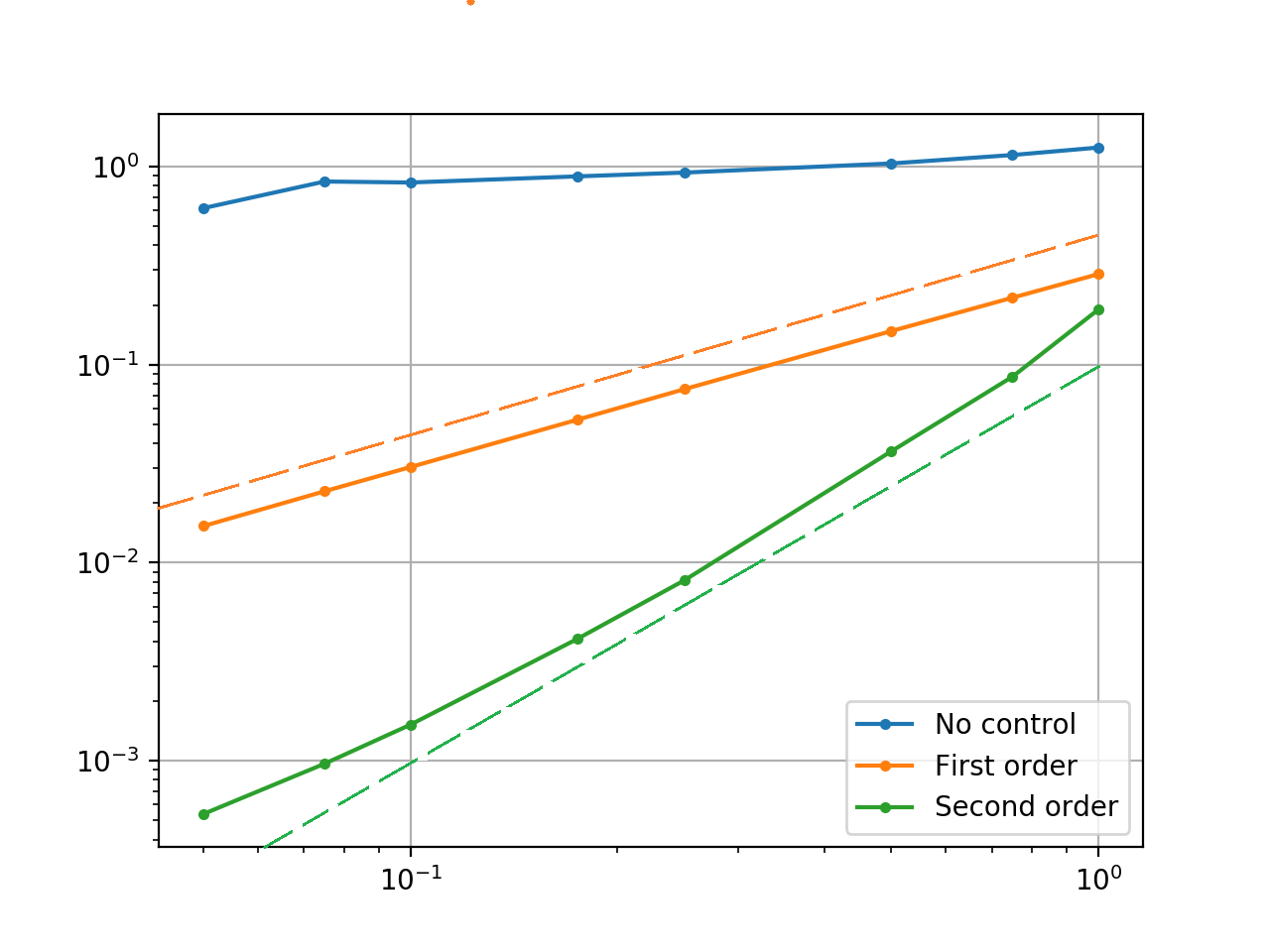}
  \caption{Left: Instanton, momentum and Riccati matrix (here scalar values).
    Right: Evolution of~$R(\eps)$ as defined in~\eqref{eq:Rcoeff} with~$\eps$ in logarithmic scale,
    for the unbiased estimator and the dynamics biased by~$\hat g_1^\eps$ and~$\hat g_2^\eps$. Linear
  and quadratic decays are plotted in dashed lines.}
\end{figure}

\section{Discussion}
\label{sec:discussion}

This work is concerned with the issue of variance reduction for the Monte Carlo
estimation of exponential-like expectations, as they often arise in statistical
physics and other areas. Our main goal was to design a framework
to assess whether a given approximation of the optimal Hamilton--Jacobi control
indeed reduces relative variance as temperature becomes small, and at which scale.

To achieve this, we introduce the notion of stochastic viscosity approximation
(SVA), inspired by recent developments on the theory of Feynman--Kac partial differential
equations~\cite{leonard2021feynman}. We believe our definition is meaningful:
a SVA approximately solves the Hamilton--Jacobi--Bellman equation along relevant tilted
paths. This is precisely what is
needed in the Girsanov theorem to make stochastic terms small. We therefore prove
an associated variance
reduction property. This definition comes in sharp
contrast with standard approximation techniques criteria, which typically control a non-local
error to assess convergence as a mesh size decays to zero. Here we show that even
an approximation that does not depend on temperature or any parameter going to zero
can have a vanishing log-efficiency in the zero temperature limit!

Since this study was in part motivated by earlier heuristics on importance
sampling~\cite{grafke-vanden-eijnden:2019,ferre2020approximate}, we specifically
study these approaches. We show, under geometrical conditions, that the first and second
order techniques proposed in~\cite{grafke-vanden-eijnden:2019,ferre2020approximate} define
stochastic viscosity
approximations of order one and two respectively, thus proving variance reduction
properties for these heuristics. This also shows that our definition can indeed
be applied to actual approximation schemes.
A simple numerical example illustrates our results.

As for any new definition, connection between stochastic viscosity solutions and
existing works should be explored further. In particular, concerning variance reduction,
it seems interesting to understand better the link with the large deviation criteria
designed in~\cite{guyader2020efficient} as well as the more involved Isaacs
equations~\cite{dupuis2007subsolutions}. Moreover, it seems natural to revisit
existing approximation techniques and check whether they match our definition and, as
already noted in~\cite{leonard2021feynman}, the relation with
viscosity solutions of HJB problems should be made clearer, appart from the variance
reduction problem.

Finally,  although our work gives ground to the schemes
based on instanton expansions as the ones presented in Section~\ref{sec:instantonexpand},
such schemes
do not apply as such when the solution to the instanton problem is not unique,
which is the case in many situations. Two ways can be followed to overcome this issue.
The first is to extend the approximation to a situation with several instantons,
to build a more general abstract expansion of the HJB equation -- this is the subject of
ongoing investigation of the author. The second, more pragmatic, is
to use tools such as the region of strong regularity~\cite{fleming1992asymptotic}
to update the computation of the instanton at certain times, in the flavour
of~\cite{hairer2014improved}. Whatever approach is chosen, we believe the notion we
introduced in this work is valuable as it quantitatively shows that,
when the goal of control is variance reduction, one should focus on
approximating the HJB equation along relevant low temperature paths.

\section*{Acknowledgements}
The author particularly thanks the referee for
providing many comments that helped to significantly improve the paper.
The author is also grateful towards Charles Bertucci for reading a first
version of the manuscript and providing a series of useful comments. He also thanks
Tobias Grafke and Hugo Touchette for interesting
discussions on large deviations, as well as Djalil Chafaï and
Gabriel Stoltz for stimulating general discussions and encouragements.


\bibliographystyle{abbrv}

\end{document}